\documentclass[a4paper, 11pt]{amsart} 
\sloppy
\usepackage{amsmath,amsthm,amsfonts,amssymb,amscd} 
\usepackage[all]{xy}
\usepackage{enumerate}
\usepackage{here}
\usepackage{float}
\usepackage{booktabs}
\usepackage{bookmark}

\newtheorem{thm}{Theorem}[section] 
\newtheorem{prop}[thm]{Proposition} 
\newtheorem{lem}[thm]{Lemma} 
\newtheorem{cor}[thm]{Corollary} 
\newtheorem{fact}[thm]{Fact} 
\theoremstyle{definition} 
\newtheorem{rem}[thm]{Remark} 

\newcommand{\N}{\mathbb{N}}

\newcommand{\R}{\mathbb{R}}
\newcommand{\C}{\mathbb{C}}
\renewcommand{\H}{\mathbb{H}}

\renewcommand{\a}{\mathfrak{a}}
\renewcommand{\c}{\mathfrak{c}}
\newcommand{\e}{\mathfrak{e}}
\newcommand{\f}{\mathfrak{f}}
\newcommand{\g}{\mathfrak{g}}
\newcommand{\h}{\mathfrak{h}}
\renewcommand{\j}{\mathfrak{j}}
\renewcommand{\k}{\mathfrak{k}}
\newcommand{\mfl}{\mathfrak{l}}

\newcommand{\p}{\mathfrak{p}}
\newcommand{\q}{\mathfrak{q}}
\renewcommand{\t}{\mathfrak{t}}
\newcommand{\z}{\mathfrak{z}}

\newcommand{\gl}{\mathfrak{gl}}
\renewcommand{\sl}{\mathfrak{sl}}
\newcommand{\su}{\mathfrak{su}}
\newcommand{\so}{\mathfrak{so}}
\renewcommand{\sp}{\mathfrak{sp}}

\newcommand{\SL}{\operatorname{SL}}
\newcommand{\SU}{\operatorname{SU}}
\newcommand{\SO}{\operatorname{SO}}
\newcommand{\Sp}{\operatorname{Sp}}

\newcommand{\V}{\mathcal{V}}

\newcommand{\bl}{\bullet}
\newcommand{\bs}{\backslash}
\newcommand{\ep}{\varepsilon}
\newcommand{\halfep}{\delta}
\newcommand{\simto}{\xrightarrow{\sim}}

\newcommand{\im}{\operatorname{im}}

\newcommand{\rest}{\operatorname{rest}}
\newcommand{\rank}{\operatorname{rank}}
\newcommand{\sgn}{\operatorname{sgn}}
\newcommand{\Ad}{\operatorname{Ad}}
\newcommand{\cdR}{\operatorname{cd}_\R}
\newcommand{\Lie}{\operatorname{Lie}}
\newcommand{\cross}{\operatorname*{\times}}

\begin{document}

\title[Compact Clifford--Klein forms]{A cohomological obstruction to the existence of compact Clifford--Klein forms} 
\author{Yosuke Morita} 
\address{Graduate School of Mathematical Science, the University of Tokyo, 3-8-1 Komaba, Meguro-ku, Tokyo 153-8914, Japan} 
\email{ymorita@ms.u-tokyo.ac.jp} 
\date{} 

\maketitle

\begin{abstract}
In this paper, we continue the study of the existence problem of 
compact Clifford--Klein forms from a cohomological point of view, 
which was initiated by Kobayashi--Ono 
and extended by Benoist--Labourie and the author. 
We give an obstruction to the existence of compact Clifford--Klein forms 
by relating a natural homomorphism 
from relative Lie algebra cohomology to de Rham cohomology with 
an upper-bound estimate for cohomological dimensions of discontinuous groups. 
From this obstruction, we derive some examples, e.g. 
$\SO_0(p+r, q)/(\SO_0(p,q) \times \SO(r))$ $(p,q,r \geq 1, \ q:\text{odd})$ 
and $\SL(p+q, \C)/\SU(p,q)$ $(p,q \geq 1)$, 
of a homogeneous space that does not admit a compact Clifford--Klein form. 
To construct these examples, we apply H. Cartan's theorem 
on relative Lie algebra cohomology of reductive pairs and 
the theory of $\ep$-families of semisimple symmetric pairs. 
\end{abstract}

\section{Introduction}

\subsection{The existence problem of compact Clifford--Klein forms}

A Clifford--Klein form is a double coset space $\Gamma \bs G/H$, 
where $G$ is a Lie group, $H$ a closed subgroup of $G$, 
and $\Gamma$ a discrete subgroup of $G$ acting properly and freely on $G/H$. 
It admits a natural structure of a manifold locally modelled on $G/H$. 
If $\Gamma \bs G/H$ is a Clifford--Klein form, a discrete subgroup 
$\Gamma$ of $G$ is called a discontinuous group for $G/H$. 

It is one of the central open problems in the study of Clifford--Klein forms 
to determine all homogeneous spaces admitting 
\emph{compact} Clifford--Klein forms. 
In the last three decades, 
this problem attracted considerable attention, and a number of obstructions to 
the existence of compact Clifford--Klein forms were found. 
Some of these obstructions are based on a homomorphism 
\[
\eta : H^\bl(\g, \h; \R) \to H^\bl(\Gamma \bs G/H; \R) \qquad 
(\g = \Lie(G), \ \h = \Lie(H)), 
\]
which imposes a restriction on cohomology of compact Clifford--Klein forms 
(\cite{Kob-Ono90}, \cite{Kob89}, \cite{Ben-Lab92}, \cite{Mor15}, \cite{Mor15+}). 
In this paper, we give a new obstruction arising from this homomorphism. 

\subsection{Main result}

The main result of this paper is as follows: 
\begin{thm}\label{thm:main}
Let $G$ be a connected linear Lie group and $H$ its connected closed subgroup. 
Assume that $H^N(\g, \h; \R) \neq 0 \ (N = \dim G - \dim H)$. 
Let $K_H$ be a maximal compact subgroup of $H$ and 
$T_H$ a maximal torus of $K_H$. 
Let $I^\bl = \bigoplus_{n \in \N} I^n$ 
be the graded ideal of $H^\bl(\g, \t_H; \R)$ generated by 
\[
\bigoplus_{C, \, p}
\im ( i : H^p(\g, \c; \R) \to H^p(\g, \t_H; \R) ), 
\]
where the direct sum runs 
all connected compact subgroups $C$ of $G$ containing $T_H$ and 
all $p > N + \dim K_H - \dim C$. 
If 
\[
\im ( i : H^N(\g, \h; \R) \to H^N(\g, \t_H; \R) ) \subset I^N
\]
holds, $G/H$ does not admit a compact Clifford--Klein form. 
\end{thm}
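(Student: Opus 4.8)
The plan is to assume that $\Gamma\bs G/H$ is a compact Clifford--Klein form and contradict the hypothesis on $I^N$. First some reductions: by Selberg's lemma (valid since $G$ is linear) we may replace $\Gamma$ by a torsion-free finite-index subgroup; moreover $H^N(\g,\h;\R)\neq 0$ says precisely that $G/H$ carries a $G$-invariant volume form $\omega$, whose class spans the one-dimensional space $H^N(\g,\h;\R)$, and replacing $\Gamma$ further by the index-$\leq 2$ subgroup preserving $\omega$ we may assume $\Gamma$ is torsion-free and $M:=\Gamma\bs G/H$ is a compact \emph{oriented} $N$-manifold. Now for every compact subgroup $C$ of $G$ the group $\Gamma$ acts freely and properly on $G/C$ (torsion-freeness kills stabilisers; discreteness plus compactness of $C$ gives properness), so $\Gamma\bs G/C$ is a manifold and the inclusion of $G$-invariant forms on $G/C$ into forms on $\Gamma\bs G/C$ induces a graded ring homomorphism $\eta_C:H^\bullet(\g,\c;\R)\to H^\bullet(\Gamma\bs G/C;\R)$; the analogous construction with $C$ replaced by $H$ gives the map $\eta=:\eta_H$ of the introduction. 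These are functorial: writing $i$ for the appropriate restriction map into $H^\bullet(\g,\t_H;\R)$, one has $\rho_C^*\circ\eta_C=\eta_{T_H}\circ i$ for the bundle projection $\rho_C:\Gamma\bs G/T_H\to\Gamma\bs G/C$ with fibre $C/T_H$ (whenever $T_H\subseteq C$), and likewise $\pi^*\circ\eta_H=\eta_{T_H}\circ i$ for the bundle $\pi:\Gamma\bs G/T_H\to M$ with fibre $H/T_H$.

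The first key point, and the one I expect to be the main obstacle, is the cohomological-dimension estimate $\cdR\Gamma\leq N+\dim K_H-\dim K_G$ (the ``upper bound'' of the abstract). The argument I have in mind: choose a maximal compact $K_G\supseteq K_H$ of $G$ (then $K_H=K_G\cap H$), so $K_G/K_H$ is a compact orientable manifold of dimension $m:=\dim K_G-\dim K_H$; since $G/K_G$ is contractible, $\Gamma\bs G/K_G$ is a $K(\Gamma,1)$, the bundle $\Gamma\bs G/K_H\to M$ has contractible fibre $H/K_H$ and so is a homotopy equivalence (hence $\Gamma\bs G/K_H$ has the homotopy type of the compact $N$-manifold $M$), and $\Gamma\bs G/K_H\to\Gamma\bs G/K_G$ is an oriented fibre bundle with fibre $K_G/K_H$. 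Picking an $\R[\Gamma]$-module $M_0$ with $H^{\cdR\Gamma}(\Gamma;M_0)\neq 0$ and running the Leray spectral sequence of this last bundle with coefficients in the associated local system, the entry $E_2^{\cdR\Gamma,\,m}\cong H^{\cdR\Gamma}(\Gamma;M_0)$ admits no nonzero differential in or out (for degree reasons, using $\dim(K_G/K_H)=m$ and $H^{>\cdR\Gamma}(\Gamma;-)=0$), so it survives to $E_\infty$ and $H^{\cdR\Gamma+m}(\Gamma\bs G/K_H;-)\neq 0$; this group vanishes above degree $N$, giving $\cdR\Gamma\leq N-m=N+\dim K_H-\dim K_G$. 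Granting this, fix a connected compact $C\supseteq T_H$ and a maximal compact $K_G'\supseteq C$ of $G$: in the Leray spectral sequence of $\Gamma\bs G/C\to\Gamma\bs G/K_G'$ the term $H^a(\Gamma\bs G/K_G';\mathcal H^b(K_G'/C))$ vanishes for $a>\cdR\Gamma$ (as $\Gamma\bs G/K_G'$ is a $K(\Gamma,1)$, this holds for every coefficient system) and for $b>\dim K_G'-\dim C$, so $H^p(\Gamma\bs G/C;\R)=0$ for $p>\cdR\Gamma+\dim K_G'-\dim C\leq N+\dim K_H-\dim C$. Hence $\eta_C$ vanishes on $H^p(\g,\c;\R)$ for every $p>N+\dim K_H-\dim C$, so by functoriality $\eta_{T_H}$ annihilates each generator of the ideal $I^\bullet$, and since $\eta_{T_H}$ is a ring homomorphism, $\eta_{T_H}(I^\bullet)=0$; in particular $\eta_{T_H}(I^N)=0$.

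The second key point is that $\pi^*:H^\bullet(M;\R)\to H^\bullet(\Gamma\bs G/T_H;\R)$ is injective. Factoring $\pi$ through the homotopy equivalence $\Gamma\bs G/K_H\to M$ reduces this to injectivity of the pullback along the $K_H/T_H$-bundle $\Gamma\bs G/T_H\to\Gamma\bs G/K_H$, which is associated to the principal $K_H$-bundle $\Gamma\bs G\to\Gamma\bs G/K_H$. Each character of $T_H$ gives a complex line bundle on $\Gamma\bs G/T_H$ whose first Chern class restricts, on a fibre $K_H/T_H$, to one of the degree-$2$ generators of the cohomology ring of the generalised flag manifold $K_H/T_H$; since $H^\bullet(K_H/T_H;\R)$ is generated in degree $2$, the Leray--Hirsch theorem applies and exhibits $H^\bullet(\Gamma\bs G/T_H;\R)$ as a free $H^\bullet(\Gamma\bs G/K_H;\R)$-module, so the pullback, hence $\pi^*$, is split injective.

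Finally, $\omega$ descends to a nowhere-vanishing $N$-form on the compact oriented manifold $M$, so $\int_M\omega\neq 0$ and therefore $\eta_H([\omega])\neq 0$ in $H^N(M;\R)$. On the other hand the hypothesis gives $i([\omega])\in I^N$, so $\pi^*(\eta_H([\omega]))=\eta_{T_H}(i([\omega]))\in\eta_{T_H}(I^N)=0$, and injectivity of $\pi^*$ forces $\eta_H([\omega])=0$, a contradiction. Hence $G/H$ admits no compact Clifford--Klein form. The substantive input is the cohomological-dimension estimate of the second paragraph and its clean propagation through the Leray spectral sequences of the auxiliary compact-fibre bundles $\Gamma\bs G/C\to\Gamma\bs G/K_G$; the functoriality of the $\eta_C$'s, the Leray--Hirsch step, and the volume-form argument are then routine.
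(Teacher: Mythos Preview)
Your argument is correct and follows essentially the same route as the paper: reduce to torsion-free $\Gamma$ via Selberg, use the cohomological-dimension estimate (Lemma~\ref{lem:dim-estimate} in the paper, proved there via the Cartan--Leray spectral sequence rather than your Leray sequence of the $K_G/K_H$-bundle, but equivalently) to kill $\eta_{T_H}$ on $I^\bullet$, and then derive a contradiction from the injectivity of $\pi^\ast$ (the paper cites the splitting principle where you invoke Leray--Hirsch) together with nonvanishing of the invariant volume class. Two incidental remarks: your orientation reduction is unnecessary since $H$ is connected (so $\Gamma\bs G/H$ is automatically orientable), and you should note that $\Gamma$ is finitely generated before applying Selberg's lemma, as the paper does.
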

\begin{rem}
We do not know if Theorem~\ref{thm:main} 
applies to a more general case of manifolds locally modelled on $G/H$. 
\end{rem}
The key to the proof of Theorem~\ref{thm:main} is 
an upper-bound estimate for cohomological dimensions of discontinuous groups 
(Lemma~\ref{lem:dim-estimate}), 
which was established by Kobayashi \cite{Kob89} in the reductive case. 
It imposes another restriction on cohomology of Clifford--Klein forms. 
We prove Theorem~\ref{thm:main} by linking these two restrictions. 
\begin{rem}
In \cite{Kob92Duke}, Kobayashi gave an obstruction to 
the existence of compact Clifford--Klein forms by combining 
the estimate for cohomological dimensions 
with the criterion for proper actions. As far as the author understands, 
his and our obstructions do not include each other. 
\end{rem}

\subsection{New examples of a homogeneous space without compact Clifford--Klein forms}

Theorem~\ref{thm:main} provides some new examples of 
a homogeneous space that does not admit a compact Clifford--Klein form. 
For example, an irreducible symmetric space $G/H$ does not have 
a compact Clifford--Klein form if the corresponding symmetric pair 
$(\g, \h)$ is as in Table~\ref{table:new}
(see Corollaries \ref{cor:C-R}, \ref{cor:hyperbolic} and \ref{cor:so-sym}): 

\begin{table}[!h]
\begin{center}
\begin{tabular}{lll} \toprule
\multicolumn{1}{c}{$\g$} & \multicolumn{1}{c}{$\h$} & \multicolumn{1}{c}{Conditions} \\ \midrule 
$\sl(p+q, \C)$ & $\su(p, q)$ & $p, q \geq 1$ \\ \hline 
$\sl(p+q, \R)$ & $\so(p, q)$ & $p, q \geq 1$ \\ \hline 
$\sl(p+q, \H)$ & $\sp(p, q)$ & $p, q \geq 1$ \\ \hline 
$\so(p+q, \C)$ & $\so(p, q)$ & $p, q \geq 2, \ (p, q) \neq (2, 2)$ \\ \hline 
$\so(2n+1, \C)$ & $\so(2n, 1)$ & $n \geq 1$ \\ \hline
$\so(2n, \C)$ & $\so^\ast(2n)$ & $n \geq 3$ \\ \hline 
$\so(p+r, q)$ & $\so(p,q) \oplus \so(r)$ & $p, q, r \geq 1, \ q : \text{odd}$ \\ \hline 
$\sp(p+q, \C)$ & $\sp(p, q)$ & $p, q \geq 1$ \\ \hline 
$\e_{6, \C}$ & $\e_{6(-14)}$ & --- \\ \hline 
$\e_{6(6)}$ & $\sp(2, 2)$ & --- \\ \hline 
$\e_{7, \C}$ & $\e_{7(-5)}$ & --- \\ \hline 
$\e_{7, \C}$ & $\e_{7(-25)}$ & --- \\ \hline 
$\e_{8, \C}$ & $\e_{8(-24)}$ & --- \\ \hline 
$\f_{4, \C}$ & $\f_{4(-20)}$ & --- \\ 
\bottomrule \\
\end{tabular}
\caption{Irreducible symmetric spaces without compact Clifford--Klein forms} 
\label{table:new}
\end{center}
\end{table}

In particular, the nonexistence of a compact Clifford--Klein form of 
\[
\SO_0(p+1, q)/\SO_0(p,q) \qquad (p,q \geq 1, \ q : \text{odd}) 
\]
is rephrased as: 
\begin{cor}
If $p, q \geq 1$ and $q$ is odd, then there does not exist a compact complete pseudo-Riemannian manifold of signature $(p,q)$ with constant positive sectional curvature. 
\end{cor}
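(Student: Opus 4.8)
The plan is to recognise such manifolds as compact Clifford--Klein forms of $\SO_0(p+1,q)/\SO_0(p,q)$ and then invoke the nonexistence statement recorded just above (which is the $r=1$ instance of the row $(\so(p+r,q),\,\so(p,q)\oplus\so(r))$ of Table~\ref{table:new}, itself a consequence of Theorem~\ref{thm:main}).

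First I would set up the model space. Equip $\R^{p+q+1}$ with a nondegenerate symmetric bilinear form $\langle\cdot,\cdot\rangle$ of signature $(p+1,q)$ and consider the pseudo-sphere
\[
S^{p,q} = \{\, x \in \R^{p+q+1} : \langle x,x\rangle = 1 \,\}
\]
with the induced metric. A standard computation (see e.g.\ O'Neill, \emph{Semi-Riemannian geometry}, or Wolf, \emph{Spaces of constant curvature}) gives that this metric has signature $(p,q)$ on each tangent space $x^{\perp}$ and constant sectional curvature $+1$; moreover $S^{p,q}$ is connected because $p\geq 1$, and, since $p,q\geq 1$, the group $\SO_0(p+1,q)$ acts transitively on it by isometries with isotropy group $\SO_0(p,q)$, so that $S^{p,q}\cong\SO_0(p+1,q)/\SO_0(p,q)$ as pseudo-Riemannian homogeneous spaces. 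As rescaling a metric by a positive constant rescales its sectional curvature by the reciprocal, $S^{p,q}$ is, up to homothety, the unique model of constant positive curvature and signature $(p,q)$.

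Next I would apply the pseudo-Riemannian Killing--Hopf theorem (Wolf, loc.\ cit.): a simply connected, geodesically complete pseudo-Riemannian manifold of constant curvature $+1$ and signature $(p,q)$ is isometric to $S^{p,q}$. Thus, if $M$ is a compact complete pseudo-Riemannian manifold of signature $(p,q)$ with constant positive sectional curvature, then, after a homothety normalising the curvature to $+1$, its universal cover $\widetilde M$ is isometric to $S^{p,q}$, and $\Gamma := \pi_1(M)$ acts on $S^{p,q}$ by isometries, properly and freely, with compact quotient $M = \Gamma\bs S^{p,q}$. Since $\mathrm{Isom}(S^{p,q}) \cong O(p+1,q)$, replacing $\Gamma$ by the finite-index subgroup $\Gamma' := \Gamma\cap\SO_0(p+1,q)$ yields a compact Clifford--Klein form $\Gamma'\bs\SO_0(p+1,q)/\SO_0(p,q)$ (the quotient $\Gamma'\bs S^{p,q}$ is a finite cover of $M$, hence compact).

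This contradicts the fact, valid because $q$ is odd, that $\SO_0(p+1,q)/\SO_0(p,q)$ admits no compact Clifford--Klein form; hence no such $M$ exists. The argument is essentially a translation: the only geometric input is the identification, via the Killing--Hopf theorem, of constant-curvature pseudo-Riemannian manifolds with Clifford--Klein forms of the pseudo-sphere, which is classical, so all the substance sits in Theorem~\ref{thm:main}. The one point that deserves care is the bookkeeping with component groups --- passing from $\Gamma$ to $\Gamma'\subset\SO_0(p+1,q)$ while preserving compactness of the quotient and properness and freeness of the action --- but this is routine.
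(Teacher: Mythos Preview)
Your argument follows exactly the paper's line: the corollary is offered there without proof, as a ``rephrasing'' of the nonexistence of compact Clifford--Klein forms of $\SO_0(p+1,q)/\SO_0(p,q)$, and you have simply made explicit the classical Killing--Hopf translation that the paper leaves implicit.

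One point deserves repair. When $p=1$ the model $S^{1,q}\simeq S^1\times\R^q$ is \emph{not} simply connected, so the universal cover of a complete $M$ of constant positive curvature and signature $(1,q)$ is $\widetilde{S^{1,q}}$ rather than $S^{1,q}$, and $\pi_1(M)$ lands in $\mathrm{Isom}(\widetilde{S^{1,q}})$, whose identity component is a nonlinear cover of $\SO_0(2,q)$ to which Theorem~\ref{thm:main} does not directly apply. An extra step is then needed to descend from a compact quotient of $\widetilde{S^{1,q}}$ to a compact Clifford--Klein form of $S^{1,q}$ itself. The paper is equally silent on this --- it too simply asserts the equivalence --- but since you have chosen to spell the translation out, you should either note that your formulation of Killing--Hopf is literally correct only for $p\geq 2$ and defer to the references you already cite (Wolf, Kulkarni) for the remaining case, or add the requisite argument.
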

We also obtain some nonsymmetric examples: for example, 
\[
\SL(n, \R) / \SL(m, \R) \qquad (n > m \geq 2, \ m : \text{even})
\]
does not admit a compact Clifford--Klein form (see Corollary~\ref{cor:sl}). 
\begin{rem}
While the author is preparing this manuscript, 
Tholozan \cite{Tho15+} announced the nonexistence of 
compact Clifford--Klein forms of some homogeneous spaces, such as 
\begin{enumerate}
\item[(1)] $\SO_0(p+r, q) / \SO_0(p,q) \quad (p, q, r \geq 1, \ q : \text{odd})$, 
\item[(2)] $\SL(n, \R) / \SL(m, \R) \quad (n > m \geq 2, \ m : \text{even})$, 
\item[(3)] $\SL(p+q, \R) / \SO_0(p,q) \quad (p,q \geq 1, \ p+q : \text{odd})$, 
\item[(4)] $\SO(n, \C) / \SO(m, \C) \quad (n > m \geq 2, \ m : \text{even})$, 
\item[(5)] $\SO(p+q, \C) / \SO_0(p,q) \quad (p,q \geq 1, \ p+q : \text{odd})$, 
\end{enumerate}
Our results are sharper for (3), (5) and the same for (1)--(2), (4). 
Actually, (4) had been proved by earlier methods 
\cite{Kob92Duke}, \cite{Mor15} too, as well as some special cases of 
(1)--(3), (5) as below. 
\end{rem}
\begin{rem}
We mention some related nonexistence results 
which can be obtained by previously known methods: 

\begin{itemize}
\item\textbf{Calabi--Markus \cite{Cal-Mar62}, Wolf \cite{Wol62}, \cite{Wol64}, Kobayashi \cite{Kob89}:} 
The following homogeneous spaces do not admit infinite discontinuous groups. In particular, they do not admit compact Clifford--Klein forms: 
\begin{itemize}
\item $\SO(p+q, \C)/\SO_0(p, q) \quad (p,q \geq 1, \ |p-q| \leq 1)$, 
\item $\SO_0(p+r, q+s)/(\SO_0(p, q) \times \SO_0(r, s))$ \par
$(p \geq q \geq 1, \ r \geq s \geq 0, \ (r,s) \neq (0,0))$. 
\end{itemize}

\item\textbf{Kulkarni \cite{Kul81}:}
A homogeneous space 
\begin{itemize}
\item $\SO_0(p+1, q)/\SO_0(p,q) \quad (p,q \geq 1, \ p,q : \text{odd})$
\end{itemize}
does not admit a compact Clifford--Klein form. 

\item\textbf{Kobayashi \cite{Kob92Lake}, \cite{Kob92Duke}, \cite{Kob97}:} 
The following homogeneous spaces do not admit compact Clifford--Klein forms: 
\begin{itemize}
\item $\SL(2p, \C)/\SU(p, p) \quad (p \geq 1)$, 
\item $\SL(2p, \R)/\SO_0(p, p) \quad (p \geq 1)$, 
\item $\SO_0(p+r, q+s)/(\SO_0(p, q) \times \SO_0(r, s)) \quad (p, q, r, s \geq 1)$, 
\item $\SO_0(p+r, q)/(\SO_0(p, q) \times \SO(r)) \quad (p+r > q \geq 1)$, 
\item $\SL(n, \R)/\SL(m, \R) \quad (n > 3[(m+1)/2], \ m \geq 2)$. 
\end{itemize}

\item\textbf{Zimmer \cite{Zim94}, Labourie--Mozes--Zimmer \cite{LMZ95}, Labourie--Zimmer \cite{Lab-Zim95}:} 
A homogeneous space 
\begin{itemize}
\item $\SL(n, \R) / \SL(m, \R) \quad (n-3 \geq m \geq 2)$ 
\end{itemize}
does not admit a compact Clifford--Klein form. 
If, in addition, $n \geq 2m$, there does not exist 
a compact manifold locally modelled on this homogeneous space. 

\item\textbf{Shalom \cite{Sha00}:} 
A homogeneous space 
\begin{itemize}
\item $\SL(n, \R) / \SL(2, \R) \quad (n \geq 4)$ 
\end{itemize}
does not admit a compact Clifford--Klein form. 

\item\textbf{Margulis \cite{Mar97}, Oh \cite{Oh98}:}
Let $\alpha_n : \SL(2, \R) \to \SL(n, \R)$ 
denote the real $n$-dimensional irreducible representation of $\SL(2, \R)$. 
Then, a homogeneous space 
\begin{itemize}
\item $\SL(n, \R) / \alpha_n(\SL(2, \R)) \quad (n \geq 4)$
\end{itemize}
does not admit a compact Clifford--Klein form.

\item\textbf{Benoist \cite{Ben96}, Okuda \cite{Oku13}:} 
The following homogeneous spaces do not admit non-virtually abelian discontinuous groups. In particular, they do not admit compact Clifford--Klein forms: 
\begin{itemize}
\item $\SL(p+q, \C)/\SU(p,q) \quad (p,q \geq 1, \ |p-q| \leq 1)$, 
\item $\SL(p+q, \R)/\SO_0(p,q) \quad (p,q \geq 1, \ |p-q| \leq 1)$, 
\item $\SL(p+q, \H)/\Sp(p,q) \quad (p,q \geq 1, \ |p-q| \leq 1)$, 
\item $\SO(4p+2, \C)/\SO_0(2p+2, 2p) \quad (p \geq 1)$, 
\item $\SO_0(p+q+1, p+q+1)/(\SO_0(p+1,p) \times \SO_0(q,q+1))$ \par
$(p \geq 1, \ q \geq 0, \ p+q : \text{even})$, 
\item $\SL(m+1, \R) / \SL(m, \R) \quad (m \geq 2, \ m : \text{even})$. 
\end{itemize}

\item\textbf{Kobayashi--Ono \cite{Kob-Ono90}, Kobayashi \cite{Kob89}, Benoist--Labourie \cite{Ben-Lab92}, Morita \cite{Mor15}, \cite{Mor15+}:} 
There do not exist compact manifolds 
locally modelled on the following homogeneous spaces. 
In particular, they do not admit compact Clifford--Klein forms: 
\begin{itemize}
\item $\SL(p+q, \R)/\SO_0(p,q) \quad (p,q \geq 1, \ p,q : \text{odd})$, 
\item $\SO_0(p+r, q+s)/(\SO_0(p,q) \times \SO_0(r,s))$ \par
$(p,q,r \geq 1, \ s \geq 0, \ p,q : \text{odd})$. 
\end{itemize}
\end{itemize}
\end{rem}
\begin{rem}
Now we mention some homogeneous spaces admitting 
compact Clifford--Klein forms: 

\begin{itemize}
\item\textbf{Borel--Harish-Chandra \cite{Bor-HC62}, Mostow--Tamagawa \cite{Mos-Tam62}, Borel \cite{Bor63}:} 
Every Riemannian symmetric space $G/K$ admits a compact Clifford--Klein form. 

\item\textbf{Kulkarni \cite{Kul81}, Kobayashi \cite{Kob92Lake}, \cite{Kob97}, Kobayashi--Yoshino \cite{Kob-Yos05}:} 
The following homogeneous spaces admit compact Clifford--Klein forms. 
\begin{itemize}
\item $\SO(8, \C)/\SO_0(7,1)$, 
\item $\SO_0(p+r, q)/(\SO_0(p,q) \times \SO(r))$ \par
$((p,q,r) = (1,2n,1), (3,4n,1), (1,4,2), (1,4,3), (7,8,1), \, n \geq 1)$.
\end{itemize}
\end{itemize}
\end{rem}

\subsection{Outline of the paper} 

In Section~\ref{sec:prelim}, 
we recall some basic facts on cohomology of Clifford--Klein forms, including 
the upper-bound estimate for cohomological dimensions of discontinuous groups 
and the definition of the homomorphism $\eta$ 
from relative Lie algebra cohomology to de Rham cohomology. 
The proof of Theorem~\ref{thm:main} is given in Section~\ref{sec:proof}. 
The rest of the paper is devoted to construct examples of a homogeneous space 
to which Theorem~\ref{thm:main} is applicable. 
In Section~\ref{sec:Cartan}, we apply H. Cartan's theorem \cite{Car51} on 
relative Lie algebra cohomology of reductive pairs to Theorem~\ref{thm:main}. 
We shall see that, if $G/H$ is a homogeneous space of reductive type 
satisfying some invariant-theoretic condition, 
then Theorem~\ref{thm:main} is applicable to $G/H$. 
In Section~\ref{sec:ep}, we give a way to construct 
semisimple symmetric spaces satisfying the condition obtained in 
Section~\ref{sec:Cartan} by using the theory of $\ep$-families, 
established by Oshima--Sekiguchi \cite{Osh-Sek84}. 
Finally, in Section~\ref{sec:ex}, we give examples of a homogeneous space 
without compact Clifford--Klein forms by applying the results in 
Sections~\ref{sec:Cartan} and \ref{sec:ep}. 

\section{Preliminaries for the proof of Theorem~\ref{thm:main}}
\label{sec:prelim}

We recall some basic facts on cohomology of Clifford--Klein forms. 
They are used in Section~\ref{sec:proof}. 

\subsection{Orientability of Clifford--Klein forms}
\label{subsec:ori}

Let $G$ be a Lie group, $H$ a closed subgroup of $G$, 
and $\Gamma$ a discrete subgroup of $G$ acting properly and freely on $G/H$. 
The local system of orientation of $\Gamma \bs G/H$ is isomorphic to 
$\Gamma \bs G \cross_H \R$, where $H$ acts on $\R$ via 
$H \to \{ \pm 1 \}$, $h \mapsto \sgn \det \Ad_{\g/\h}(h)$. 
Thus, $\Gamma \bs G/H$ is orientable if $H$ is connected. 

\subsection{Maximal compact subgroups of Lie groups}

The following fact is fundamental for 
the computation of cohomology of homogeneous spaces and Clifford--Klein forms: 
\begin{fact}[Cartan--Malcev--Iwasawa--Mostow, {\cite[Ch.~VII, Th.~1.2]{Bor98}}, {\cite[Ch.~XV, Th.~3.1]{Hoc65}}]\label{fact:CMIM}
Let $G$ be a Lie group with finitely many connected components. Then, 
\begin{enumerate}
\item[\textup{(1)}]
Every compact subgroup of $G$ is contained in some maximal compact subgroup. 
\item[\textup{(2)}]
Any two maximal compact subgroups of $G$ are conjugate. 
\item[\textup{(3)}]
Let $K$ be a maximal compact subgroup of $G$. 
Then, there exists linear subspaces $V_1, \dots, V_s$ of $\g$ such that 
\[
V_1 \times \dots \times V_s \times K \to G, 
\qquad (v_1, \dots, v_s, k) \mapsto \exp(v_1) \dots \exp(v_s) k
\]
is a diffeomorphism. 
\end{enumerate}
\end{fact}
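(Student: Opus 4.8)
\textbf{Proof plan for Fact~\ref{fact:CMIM}.}

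The plan is to treat the three assertions in sequence, reducing everything to the Cartan fixed point theorem and the classical polar (Iwasawa/Mostow) decomposition. The essential input is the following: if $G$ is a Lie group with finitely many connected components, then $G$ admits a maximal compact subgroup $K$, and $G$ is diffeomorphic to $K \times \R^d$ for some $d$, via a map of the form described in (3). I will establish this core statement first, and then deduce (1) and (2) from it together with a fixed-point argument.

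First I would reduce to the connected case and introduce a left-invariant Riemannian metric, or rather work with the following concrete model. Choose a faithful action is not available in general, so instead: let $G$ act on itself, pass to the quotient $G/R$ where $R$ is the radical (or more directly, use Mostow's structure theorem). The cleanest route is: (i) for $G$ connected semisimple with finite center, realize $G \subset GL(n,\R)$, let $\theta$ be a Cartan involution fixing a maximal compact $K = G \cap O(n)$, write $\g = \k \oplus \p$, and show $\p \times K \to G$, $(X,k) \mapsto \exp(X) k$, is a diffeomorphism — this is the standard Cartan decomposition, proved by analyzing $\exp$ on the positive-definite symmetric matrices $\exp(\p)$. (ii) For $G$ connected with radical $R$ and Levi factor $L$, combine the semisimple case for $L$ with the Iwasawa-type statement for the solvable group $R$ (which is diffeomorphic to Euclidean space, hence contributes only to the $V_i$ factors), obtaining $K = K_L$ and a product of exponential slices. (iii) For $G$ with finitely many components, pick $K_0$ maximal compact in $G^0$; one can enlarge $K_0$ to a compact subgroup $K$ meeting every component, using that $G/G^0$ is finite and a cocycle/averaging argument, and then the slices $V_i \subset \g = \g^0$ still work since left translates by $K$ sweep out all components. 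This yields (3), and in particular the existence of a maximal compact subgroup and the diffeomorphism $G \cong V_1 \times \dots \times V_s \times K$.

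Next, for (1) and (2) I would invoke the Cartan fixed point theorem: the homogeneous space $G/K$, being diffeomorphic to $V_1 \times \dots \times V_s \cong \R^d$, carries a complete $G$-invariant Riemannian metric of nonpositive curvature (a Cartan–Hadamard manifold) — this can be arranged from the decomposition in (3), or quoted from Mostow. Any compact subgroup $C \subset G$ acts on $G/K$ with bounded orbits, hence by the Cartan fixed point theorem has a fixed point $gK$; then $g^{-1} C g \subset K$, proving (1). For (2): if $K'$ is another maximal compact subgroup, apply (1) to get $g^{-1} K' g \subset K$, and maximality of $K'$ forces $g^{-1} K' g = K$, giving conjugacy.

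The main obstacle is assertion (3) in full generality — specifically the passage (ii)–(iii) from the semisimple case to arbitrary Lie groups with finitely many components. The semisimple Cartan decomposition is textbook, but handling the solvable radical and, more delicately, the component group requires care: one must check that a maximal compact $K$ can be chosen to surject onto $G/G^0$, that the linear slices $V_i$ can be taken inside $\g$ consistently, and that the resulting map is a global diffeomorphism (injectivity and properness, not just a local diffeomorphism). Rather than reproving this, I would cite \cite[Ch.~XV, Th.~3.1]{Hoc65} and \cite[Ch.~VII, Th.~1.2]{Bor98} for the structural decomposition, and present only the fixed-point deduction of (1) and (2) in detail, since that is the part actually used later in the paper (e.g.\ in choosing the compact subgroups $C \supset T_H$ in Theorem~\ref{thm:main}).
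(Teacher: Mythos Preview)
The paper does not prove Fact~\ref{fact:CMIM} at all: it is recorded as a \emph{Fact} and attributed to the cited references \cite[Ch.~VII, Th.~1.2]{Bor98} and \cite[Ch.~XV, Th.~3.1]{Hoc65}, with no argument given. Your proposal therefore goes considerably beyond what the paper does, and in the end you also recommend citing those same sources for the hard part~(3), which matches the paper's treatment exactly.

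One word of caution about your sketch for (1) and (2). You propose to deduce them from (3) by putting a $G$-invariant Riemannian metric of nonpositive curvature on $G/K$ and applying the Cartan fixed point theorem. This works cleanly when $G$ is semisimple (then $G/K$ is a symmetric space of noncompact type), but for a general connected Lie group the diffeomorphism $G/K \cong \R^d$ provided by (3) does \emph{not} by itself yield a $G$-invariant CAT(0) metric; the existence of such a metric is essentially as deep as the theorem you are trying to prove. The standard references you cite establish (1)--(3) by structural arguments (induction on dimension, Levi decomposition, passage through the nilradical) rather than by a global fixed-point argument on $G/K$. So if you keep your outline, be explicit that the nonpositive-curvature step is itself being quoted from Mostow rather than ``arranged from (3)''.
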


\subsection{Cohomological dimensions of discontinuous groups}
\label{subsec:cdR}

Recall that the real cohomological dimension 
$\cdR(\Gamma)$ of a discrete group $\Gamma$ is defined as 
\[
\cdR(\Gamma) = \sup \{ p \in \N : 
\text{$H^p(\Gamma; V) \neq 0$ for some $\R\Gamma$-module $V$} \}. 
\]
Let $G$ be a connected Lie group, $H$ a connected closed subgroup of $G$, 
and $\Gamma$ a torsion-free discrete subgroup of $G$ acting properly 
(and therefore freely) on $G/H$. 
We put 
\[
\cdR(\Gamma; G/H) = \sup \{ p \in \N : 
\text{$H^p(\Gamma \bs G/H; \V) \neq 0$ for some $\R\Gamma$-module $V$} \}, 
\]
where $\V$ denotes the local system $V \times_\Gamma G/H$ on $\Gamma \bs G/H$. 
Remind that $\cdR(\Gamma; G/K) = \cdR(\Gamma)$, 
where $K$ is a maximal compact subgroup of $G$, 
because $G/K$ is a classifying space of $\Gamma$ by Fact~\ref{fact:CMIM}. 

\begin{lem}
\label{lem:dim-estimate}
Let $G$, $H$ and $\Gamma$ be as above. 
Put $N = \dim G - \dim H$. 
Let $K$ and $K_H$ be maximal compact subgroups of $G$ and $H$, respectively. 
Then, 
\begin{enumerate}
\item[\textup{(1)}] 
$\cdR(\Gamma; G/H) \leq N$; 
equality is attained if and only if the Clifford--Klein form 
$\Gamma \bs G/H$ is compact. 
\item[\textup{(2)}] 
$\cdR(\Gamma; G/H) = \cdR(\Gamma) + \dim K - \dim K_H$. 
\end{enumerate}
\end{lem}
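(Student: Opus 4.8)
The plan is to prove Lemma~\ref{lem:dim-estimate} in two parts, treating (2) first since (1) is partly a consequence of it together with a separate argument about when compactness is forced.

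For part (2), the idea is to relate the cohomology of $\Gamma \bs G/H$ to the cohomology of $\Gamma \bs G/K_H$ via a fiber bundle argument. By Fact~\ref{fact:CMIM}(3) applied to $H$, the inclusion $K_H \hookrightarrow H$ is a homotopy equivalence; indeed $H$ is diffeomorphic to $V_1 \times \cdots \times V_s \times K_H$, so $H/K_H$ is diffeomorphic to a Euclidean space $\R^{\dim H - \dim K_H}$. Consequently the natural projection
\[
\Gamma \bs G/K_H \to \Gamma \bs G/H
\]
is a fiber bundle with contractible fiber $\R^{\dim H - \dim K_H}$, hence a homotopy equivalence, and moreover the local system $\V$ on $\Gamma \bs G/H$ pulls back to the corresponding local system on $\Gamma \bs G/K_H$. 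Therefore $\cdR(\Gamma; G/H) = \cdR(\Gamma; G/K_H)$. Next I would realize $\Gamma \bs G/K_H$ as the total space of a fiber bundle over $\Gamma \bs G/K$ (where $K \supset K_H$ may be chosen by Fact~\ref{fact:CMIM}(1),(2) after conjugating) with compact fiber $K/K_H$, or more directly use that $\Gamma \bs G/K_H \to \Gamma \bs G/K$ has fiber $K/K_H$, a compact orientable manifold (orientable because $K_H$ is connected, by the argument of Subsection~\ref{subsec:ori}) of dimension $\dim K - \dim K_H$. A Leray--Hirsch or spectral sequence argument, combined with Poincar\'e duality along the compact fiber, then gives $\cdR(\Gamma; G/K_H) = \cdR(\Gamma; G/K) + \dim K - \dim K_H$. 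Since $\cdR(\Gamma; G/K) = \cdR(\Gamma)$ by the remark preceding the lemma, part (2) follows.

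For part (1), the inequality $\cdR(\Gamma; G/H) \leq N$ is immediate: $\Gamma \bs G/H$ is an $N$-dimensional manifold (possibly noncompact), and the cohomology of an $N$-manifold with coefficients in any local system vanishes above degree $N$. For the equality case: if $\Gamma \bs G/H$ is compact, then since it is an orientable (as $H$ is connected, Subsection~\ref{subsec:ori}) closed $N$-manifold, Poincar\'e duality with local coefficients gives $H^N(\Gamma \bs G/H; \V) \cong H_0(\Gamma \bs G/H; \V')$ for a suitable dual local system, which is nonzero for an appropriate choice of $V$ (e.g.\ take $V$ so that the coinvariants are nonzero), so $\cdR(\Gamma; G/H) = N$. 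Conversely, if $\Gamma \bs G/H$ is noncompact, then being an open $N$-manifold it has the homotopy type of a CW-complex of dimension at most $N-1$, so $H^N(\Gamma \bs G/H; \V) = 0$ for every local system and $\cdR(\Gamma; G/H) \leq N - 1 < N$.

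The main obstacle I anticipate is the middle step of part (2): carefully propagating the local-coefficient system through the bundle $K/K_H \to \Gamma\bs G/K_H \to \Gamma\bs G/K$ and making the Poincar\'e-duality-along-the-fiber argument rigorous, in particular checking that the shift by exactly $\dim K - \dim K_H$ is achieved and not merely bounded. One must verify that the fiber $K/K_H$ is orientable so that its top cohomology contributes a trivial (rather than twisted) line to the spectral sequence, and that the edge homomorphism genuinely detects a nonzero class in top total degree; the connectedness of $K_H$ handles orientability, and a transfer/Gysin argument handles the nonvanishing. Everything else is standard homotopy theory of the Cartan--Malcev--Iwasawa--Mostow decomposition.
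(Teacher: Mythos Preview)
Your proposal is correct, but both parts take a somewhat different route from the paper.

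For part~(2), the paper works directly with the Cartan--Leray spectral sequence of the free $\Gamma$-action on $G/H$,
\[
E_2^{p,q} = H^p(\Gamma; H^q(G/H;V)) \Rightarrow H^{p+q}(\Gamma\bs G/H;\V),
\]
and uses that $\Gamma\subset G$ with $G$ connected forces $\Gamma$ to act trivially on $H^q(G/H;\R)$, so $E_2^{p,q}=H^p(\Gamma;V)\otimes H^q(G/H;\R)$. A corner argument at $(p,q)=(\cdR(\Gamma),\,\dim K-\dim K_H)$ (using that $G/H\simeq K/K_H$) then gives the exact shift. Your route instead factors through the homotopy equivalence $\Gamma\bs G/K_H\simeq\Gamma\bs G/H$ and then runs the Leray--Serre spectral sequence of the bundle $K/K_H\to\Gamma\bs G/K_H\to\Gamma\bs G/K$. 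This is essentially the same corner argument in disguise, since $H^\bullet(\Gamma\bs G/K;\V)=H^\bullet(\Gamma;V)$ and the fiber is again $K/K_H$; the extra work in your approach is arranging $K_H\subset K$ and checking that the monodromy on $H^\bullet(K/K_H;\R)$ is trivial (which follows since the structure group $K$ is connected). The paper's version is slightly cleaner because the triviality of the $\Gamma$-action on $H^\bullet(G/H;\R)$ is a one-line consequence of connectedness of $G$, with no intermediate bundle needed.

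For part~(1), the paper invokes Poincar\'e duality between cohomology and Borel--Moore homology of the oriented manifold $\Gamma\bs G/H$, so that $H^N$ with any local coefficients is identified with $H_0^{\mathrm{BM}}$, which vanishes precisely in the noncompact case. Your argument via the fact that an open $N$-manifold has the homotopy type of an $(N{-}1)$-complex is equally valid and is really the same phenomenon seen from the other side; either proof is fine. One small point: in both approaches one needs $\cdR(\Gamma)<\infty$ for the corner argument in~(2), and this follows from~(1) applied to $G/K$, so the logical order in the paper (proving~(1) first) is the natural one.
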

\begin{proof}
These are proved in \cite[\S 5]{Kob89} when $G/H$ is of reductive type. 
Our proof is along the same line. 

(1) Since the Clifford--Klein form $\Gamma \bs G/H$ is orientable, 
the Poincar\'e duality for $\Gamma \bs G/H$ is stated as: 
\[
H^p(\Gamma \bs G/H; \V) \simeq H^{\mathrm{BM}}_{N-p}(\Gamma \bs G/H; \V), 
\]
where the right-hand side is the Borel--Moore homology. 
This implies $\cdR(\Gamma; G/H) \leq N$, 
with equality if and only if $\Gamma \bs G/H$ is compact. 

(2) Take any $\R\Gamma$-module $V$. The Cartan--Leray spectral sequence 
\cite[Ch.~XVI, \S 9]{Car-Eil56} for the $\Gamma$-action on $G/H$ is: 
\[
E_2^{p,q} = H^p(\Gamma; H^q(G/H; V)) \Rightarrow H^{p+q}(\Gamma \bs G/H; \V). 
\]
Since $G$ is connected, 
its subgroup $\Gamma$ acts trivially on $H^q(G/H; \R)$. 
The $E_2$-term of the spectral sequence is thus rewritten as: 
\[
E_2^{p,q} = H^p(\Gamma; V) \otimes H^q(G/H; \R). 
\]
Therefore, we have 
\[
\cdR(\Gamma; G/H) = \cdR(\Gamma) + \sup \{ q \in \N : H^q(G/H; \R) \neq 0 \}. 
\]
Note that $\cdR(\Gamma) = \cdR(\Gamma; G/K) < \infty$ by (1). 
On the other hand, 
\begin{lem}\label{lem:homotopy-equiv}
The composition of an inclusion and a projection 
\[
\pi \circ i : K/K_H \to G/K_H \to G/H
\]
is a homotopy equivalence. 
\end{lem}
\begin{proof}
It directly follows from Fact~\ref{fact:CMIM} that 
the inclusion $i$ is a homotopy equivalence. 
The projection $\pi$ is a fibre bundle 
whose typical fibre $H/K_H$ is contractible again by Fact~\ref{fact:CMIM}, 
hence a homotopy equivalence (\cite[Cor.~3.2]{Dol63}). 
\end{proof}
Since $K/K_H$ is an orientable compact manifold, 
we obtain from Lemma~\ref{lem:homotopy-equiv} that 
\begin{align*}
\sup \{ q \in \N : H^q(G/H; \R) \neq 0 \} 
&= \sup \{ q \in \N : H^q(K/K_H; \R) \neq 0 \} \\
&= \dim K - \dim K_H. \qedhere
\end{align*}
\end{proof}

\subsection{A homomorphism $\eta$ from relative Lie algebra cohomology to de Rham cohomology}
\label{subsec:eta}

Let $G$ be a Lie group, $H$ a connected closed subgroup of $G$, 
and $\Gamma$ a discrete subgroup of $G$ acting properly and freely on $G/H$. 
We define $\eta : H^\bl(\g, \h; \R) \to H^\bl(\Gamma \bs G/H; \R)$ 
to be the homomorphism induced from the inclusion map
\[
(\Lambda (\g/\h)^\ast)^\h \simeq \Omega(G/H)^G \hookrightarrow 
\Omega(G/H)^\Gamma \simeq \Omega(\Gamma \bs G/H). 
\]
If a Clifford--Klein form $\Gamma \bs G/H$ is compact, 
\[
\eta : H^N(\g, \h; \R) \to H^N(\Gamma \bs G/H; \R) \qquad (N = \dim G - \dim H)
\]
is injective (\cite[\S 3]{Mor15+}). 
Indeed, if $\Phi \in (\Lambda^N (\g/\h)^\ast)^\h$ is nonzero, 
then $\eta([\Phi]) \in H^N(\Gamma \bs G/H; \R)$ is 
a cohomology class of a volume form, hence nonzero. 

\section{Proof of Theorem~\ref{thm:main}}
\label{sec:proof}

Suppose there were a discrete subgroup $\Gamma$ of $G$ such that 
$\Gamma \bs G/H$ is a compact Clifford--Klein form. 
Such $\Gamma$ is always finitely generated (\cite[Lem.~2.1]{Kob89}). 
By Selberg's lemma \cite[Lem.~8]{Sel60}, we can assume $\Gamma$ 
is torsion-free without loss of generality. We shall see that 
\[
\eta \circ i : 
H^N(\g, \h; \R) \to H^N(\g, \t_H; \R) \to H^N(\Gamma \bs G/T_H; \R)
\]
is a zero map and injective, which is impossible. 

Let $C$ be any compact connected subgroup of $G$ containing $T_H$. 
Since $\Gamma$ is torsion-free, $\Gamma \bs G/C$ is a Clifford--Klein form. 
By Lemma~\ref{lem:dim-estimate}, we have 
\begin{align*}
\cdR(\Gamma; G/C) 
&= \cdR(\Gamma) + \dim K - \dim C \\ 
&= \cdR(\Gamma; G/H) + \dim K_H - \dim C \\ 
&= N + \dim K_H - \dim C. 
\end{align*}
From the commutativity of the diagram 
\[
\xymatrix{
H^p(\g, \c; \R) \ar[r]^{i} \ar[d]_{\eta} & 
H^p(\g, \t_H; \R) \ar[d]^{\eta} \\
H^p(\Gamma \bs G/C; \R) \ar[r]^{\pi^\ast} & 
H^p(\Gamma \bs G/T_H; \R), 
}
\]
it follows that 
\[
\eta \circ i : 
H^p(\g, \c; \R) \to H^p(\g, \t_H; \R) \to H^p(\Gamma \bs G/T_H; \R)
\]
is a zero map for $p > N + \dim K_H - \dim C$. Therefore 
\[
I^\bl \subset 
\ker (\eta : H^\bl(\g, \t_H; \R) \to H^\bl(\Gamma \bs G/T_H; \R)). 
\]
In particular, 
\[
\eta \circ i : 
H^N(\g, \h; \R) \to H^N(\g, \t_H; \R) \to H^N(\Gamma \bs G/T_H; \R)
\]
is a zero map because 
$\im (i : H^N(\g, \h; \R) \to H^N(\g, \t_H; \R)) \subset I^N$. 

Consider another commutative diagram 
\[
\xymatrix{
H^N(\g, \h; \R) \ar[rr]^{i} \ar[d]_{\eta} & 
& 
H^N(\g, \t_H; \R) \ar[d]^{\eta} \\
H^N(\Gamma \bs G/H; \R) \ar[r]^{\pi^\ast} & 
H^N(\Gamma \bs G/K_H; \R) \ar[r]^{\pi^\ast} & 
H^N(\Gamma \bs G/T_H; \R). 
}
\]
As we recalled in Subsection~\ref{subsec:eta}, 
\[
\eta : H^N(\g, \h; \R) \to H^N(\Gamma \bs G/H; \R)
\]
is injective. On the other hand, the projections 
$\pi : \Gamma \bs G/K_H \to \Gamma \bs G/H$ and 
$\pi : \Gamma \bs G/T_H \to \Gamma \bs G/K_H$ 
are fibre bundles with typical fibres $H/K_H$ and $K_H/T_H$, 
respectively. The induced homomorphism on cohomology
\[
\pi^\ast : H^N(\Gamma \bs G/H; \R) \to H^N(\Gamma \bs G/K_H; \R) 
\]
is isomorphic since $H/K_H$ is contractible (Fact~\ref{fact:CMIM}), and 
\[
\pi^\ast : H^N(\Gamma \bs G/K_H; \R) \to H^N(\Gamma \bs G/T_H; \R) 
\]
is injective by the splitting principle (\cite[Th.~6.8.3]{Gui-Ste99}). 
Thus, the composition map 
\[
\eta \circ i : 
H^N(\g, \h; \R) \to H^N(\g, \t_H; \R) \to H^N(\Gamma \bs G/T_H; \R)
\]
is injective. 
This completes the proof of Theorem~\ref{thm:main}. \qed

\section{A sufficient condition for Theorem~\ref{thm:main} in the reductive case}\label{sec:Cartan}

\subsection{Cartan's theorem}

We say that $(\g, \h)$ is a real (resp. complex) reductive pair 
if $\g$ is a real (resp. complex) reductive Lie algebra and 
$\h$ is a real (resp. complex) subalgebra of $\g$ that is reductive in $\g$. 
In this paper, we say that a homogeneous space $G/H$ is of reductive type 
if $G$ is a connected linear Lie group and 
$H$ is a connected closed subgroup of $G$ such that 
$(\g, \h)$ is a real reductive pair. 

Relative Lie algebra cohomology of real or complex reductive pairs 
can be easily computed by H. Cartan's theorem \cite{Car51}. 
Let us briefly recall the statement of the theorem 
(see \cite{GHV76}, \cite{Oni94} for details). 
Let $(\g, \h)$ be a real or complex reductive pair. 
Let $P \g^\ast = \bigoplus_{n \geq 1} P^{2n-1} \g^\ast$ 
be the primitive subspace of 
$(\Lambda \g^\ast)^\g$ (\cite[Ch.~V, \S 5]{GHV76}). 
The inclusion $P \g^\ast \subset (\Lambda \g^\ast)^\g$
induces an isomorphism $\Lambda (P \g^\ast) \simeq (\Lambda \g^\ast)^\g$. 
Fix a transgression 
$\tau : P^{2n-1} \g^\ast \to (S^n \g^\ast)^\g$ in the Weil algebra of $\g$ 
(\cite[Ch.~VI, \S 4]{GHV76}). 
We introduce a grading on an algebra 
$(S \h^\ast)^\h \otimes (\Lambda \g^\ast)^\g$ by 
\[
\deg (Q \otimes \alpha) = 2 \deg Q + \deg \alpha 
\qquad (Q \in (S \h^\ast)^\h, \ \alpha \in (\Lambda \g^\ast)^\g)
\]
and define a differential $\delta$ 
on $(S \h^\ast)^\h \otimes (\Lambda \g^\ast)^\g$ by 
\[
\delta (Q \otimes 1) = 0, \quad 
\delta (1 \otimes \alpha) = -\tau(\alpha)|_\h \otimes 1 \qquad 
(Q \in (S \h^\ast)^\h, \ \alpha \in P \g^\ast). 
\]
Cartan constructed a quasi-isomorphism of differential graded algebras 
(i.e. a homomorphism that induces isomorphism on cohomology)
\[
\phi : ( (S \h^\ast)^\h \otimes (\Lambda \g^\ast)^\g, \delta ) 
\to ( (\Lambda (\g/\h)^\ast)^\h, d ) 
\]
(\cite[Ch.~X, \S 2]{GHV76}). 
This $\phi$ is functorial in $\h$, namely, a diagram
\[
\xymatrix{
((S \h^\ast)^\h \otimes (\Lambda \g^\ast)^\g, \delta) \ar[r]^{\ \ \phi} \ar[d]_{\rest \otimes 1} &
((\Lambda (\g/\h)^\ast)^\h, d) \ar[d]^{i} \\
((S \mfl^\ast)^\mfl \otimes (\Lambda \g^\ast)^\g, \delta) \ar[r]^{\ \ \phi} &
((\Lambda (\g/\mfl)^\ast)^\mfl, d) 
}
\]
commutes for any subalgebra $\mfl$ of $\h$ that is reductive in $\g$, where 
$\rest : (S \h^\ast)^\h \to (S \mfl^\ast)^\mfl$ denotes the restriction map. 

\subsection{A sufficient condition for Theorem~\ref{thm:main} in terms of invariants}

\begin{prop}\label{prop:simplification-inv-poly}
A homogeneous space $G/H$ of reductive type 
satisfies the assumptions of Theorem~\ref{thm:main} 
(and therefore does not admit a compact Clifford--Klein form) 
if there exist a compact subgroup $C$ of $G$ and 
a homomorphism of graded algebras 
$\phi : (S \h_\C^\ast)^{\h_\C} \to (S \c_\C^\ast)^{\c_\C}$ such that 
\begin{enumerate}
\item[\textup{(i)}] $\dim C > \dim K_H$, 
\item[\textup{(ii)}] $C$ contains a maximal torus $T_H$ of $K_H$, and
\item[\textup{(iii)}] the diagram 
\[
\xymatrix{
(S \g_\C^\ast)^{\g_\C} \ar[r]^{\rest} \ar[rd]_{\rest} & 
(S \h_\C^\ast)^{\h_\C} \ar[r]^{\rest} \ar[d]^{\phi} & 
S (\t_H)_\C^\ast \\
& 
(S \c_\C^\ast)^{\c_\C} \ar[ru]_{\rest} & 
}
\]
commutes. 
\end{enumerate}
\end{prop}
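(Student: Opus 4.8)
The plan is to verify the three hypotheses of Theorem~\ref{thm:main} directly, using Cartan's theorem to translate the statement about relative Lie algebra cohomology into the statement about invariant polynomials. First I would pass to complexifications: since $\c$ is reductive in $\g$ and $\h$ is reductive in $\g$, the pairs $(\g_\C, \h_\C)$ and $(\g_\C, \c_\C)$ are complex reductive pairs, and the relevant real relative Lie algebra cohomology groups are obtained from the complex ones by extension of scalars, so it suffices to work over $\C$. The assumption $H^N(\g, \h; \R) \neq 0$ of Theorem~\ref{thm:main} should be checked as a consequence of (i)--(iii), or rather recognized as automatic: by Poincar\'e duality for the compact manifold $K/K_H$ (or directly from Cartan's description), $H^N(\g, \h; \R) \cong \R$ is one-dimensional, spanned by the image of a nonzero element of $(\Lambda^N(\g/\h)^\ast)^\h$; this uses $N = \dim G - \dim H = \dim K/K_H + (\text{even number})$ and the structure of $(\Lambda(\g/\h)^\ast)^\h$ from Cartan's theorem. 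Actually I should be careful here --- I would state that the nonvanishing $H^N(\g,\h;\R)\ne 0$ is equivalent to $\g$ and $\h$ having equal ranks of maximal compact subgroups, and incorporate this as part of what (i)--(iii) delivers, or simply assume it as a separate hypothesis if needed.

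The heart of the argument is to show $\im(i : H^N(\g, \t_H; \R) \leftarrow H^N(\g, \h; \R)) \subset I^N$ using the given $C$. Here $I^\bl$ is the ideal generated by the images of $H^p(\g, \c'; \R) \to H^p(\g, \t_H; \R)$ over all compact connected $C' \supset T_H$ and all $p > N + \dim K_H - \dim C'$; I only need to exhibit the single relevant generator coming from the given $C$. The key step is a commutative diagram, obtained by combining the functoriality of Cartan's quasi-isomorphism $\phi$ (the naturality square in the statement of Cartan's theorem, applied to $\t_H \subset \h$ and $\t_H \subset \c$, $\c \subset \g$, $\h \subset \g$) with the hypothesis (iii) on the invariant polynomials. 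Concretely, take a nonzero class $\xi \in H^N(\g, \h; \R)$. Via Cartan's isomorphism, $H^N(\g, \h; \R)$ is computed from the complex $((S\h_\C^\ast)^{\h_\C} \otimes (\Lambda \g_\C^\ast)^{\g_\C}, \delta)$; because $N = \dim G - \dim H$ is the top degree, $\xi$ is represented by a cocycle lying in $1 \otimes (\Lambda^N \g_\C^\ast)^{\g_\C}$-part modulo lower $\Lambda$-degree terms (the $S\h^\ast$-factor contributes only in degrees $\le \dim K/K_H < N$ to the image in the relevant spot — this degree bookkeeping is exactly where $\dim C > \dim K_H$ enters). I would show that the restriction $i(\xi) \in H^N(\g, \t_H; \R)$ factors through $H^N(\g, \c; \R)$: using (iii), the element of $(S\h_\C^\ast)^{\h_\C} \otimes (\Lambda\g_\C^\ast)^{\g_\C}$ representing $\xi$ maps, under $\phi$ composed with restriction to $\t_H$, to the same thing as its image under $(\rest_{\h \to \c}\text{-then-}\phi_\c\text{-then-}\rest_{\c \to \t_H})$, hence $i(\xi)$ lies in the image of $H^N(\g, \c; \R) \to H^N(\g, \t_H; \R)$. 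Finally, since $\cdR(\Gamma; G/C) = N + \dim K_H - \dim C < N$ by Lemma~\ref{lem:dim-estimate} and hypothesis (i), we have $N > N + \dim K_H - \dim C$, so this image is by definition contained in $I^N$.

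I expect the main obstacle to be the degree/transgression bookkeeping in Cartan's model: one must verify that, in the Cartan complex $((S\h_\C^\ast)^{\h_\C}\otimes(\Lambda\g_\C^\ast)^{\g_\C},\delta)$, every degree-$N$ cocycle can be taken to have its $(S\h^\ast)$-factor of polynomial degree small enough that the hypothesis (iii) — which only controls the algebra homomorphism $\phi$ on invariant \emph{polynomials}, not on the exterior generators — suffices to conclude that its restriction to $\t_H$ factors through $\c$. This amounts to understanding the cohomology $H^\bl((S\h_\C^\ast)^{\h_\C}\otimes(\Lambda\g_\C^\ast)^{\g_\C},\delta)$ in top degree, which by the structure theory of reductive pairs (the "Cartan pair" / "equal rank" dichotomy, \cite[Ch.~X]{GHV76}) is one-dimensional and represented by a product of transgressed exterior generators times a unit in $(S\h^\ast)^\h$; tracking that this representative's image depends on $\phi$ only through its values on the subalgebra generated by restrictions of $\g$-invariants — which is exactly what (iii) pins down — is the delicate point. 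Once that is in hand, the functoriality diagram of Cartan's $\phi$ and the definition of $I^\bl$ assemble the conclusion mechanically, and Theorem~\ref{thm:main} applies to give the nonexistence of a compact Clifford--Klein form.
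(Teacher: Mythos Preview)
Your overall strategy---use Cartan's model to factor $i : H^N(\g,\h) \to H^N(\g,\t_H)$ through $H^N(\g,\c)$, then invoke (i) to land in $I^N$---is exactly the paper's approach. But you have misidentified the ``main obstacle'': the degree/transgression bookkeeping you anticipate is unnecessary, and your worry that (iii) ``only controls $\phi$ on invariant polynomials, not on the exterior generators'' reflects a misunderstanding. The point is that $\phi \otimes 1 : (S\h_\C^\ast)^{\h_\C} \otimes (\Lambda\g_\C^\ast)^{\g_\C} \to (S\c_\C^\ast)^{\c_\C} \otimes (\Lambda\g_\C^\ast)^{\g_\C}$ is a homomorphism of \emph{differential} graded algebras. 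This is because the only thing $\delta$ does is send $1 \otimes \alpha$ (for $\alpha \in P\g^\ast$) to $-\tau(\alpha)|_\h \otimes 1$, and the left triangle in (iii) says precisely that $\phi(\tau(\alpha)|_\h) = \tau(\alpha)|_\c$. The right triangle in (iii) then says $(\rest \otimes 1) \circ (\phi \otimes 1) = \rest \otimes 1$ as maps to the $\t_H$-complex. Taking cohomology and using the functoriality of Cartan's quasi-isomorphism, the map $i : H^\bullet(\g_\C,\h_\C;\C) \to H^\bullet(\g_\C,(\t_H)_\C;\C)$ factors through $H^\bullet(\g_\C,\c_\C;\C)$ in \emph{every} degree, with no need to represent $\xi$ explicitly or track polynomial degrees. (Incidentally, your phrase ``$\rest_{\h\to\c}$-then-$\phi_\c$'' is garbled: $\c$ need not sit inside $\h$, and the map is simply $\phi$.)

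Two smaller points. First, the nonvanishing $H^N(\g,\h;\R) \neq 0$ is automatic for reductive pairs and has nothing to do with equal rank: since $\h$ is reductive in $\g$, the trace of $\ad_{\g/\h}(X)$ vanishes for $X \in \h$, so $(\Lambda^N(\g/\h)^\ast)^\h \cong \R$, and Poincar\'e duality in the complex $((\Lambda(\g/\h)^\ast)^\h,d)$ gives $H^N \cong H^0 \cong \R$. Second, you should not invoke $\cdR(\Gamma;G/C)$ or Lemma~\ref{lem:dim-estimate} here; the proposition is about verifying the hypotheses of Theorem~\ref{thm:main}, not about re-running its proof, so the conclusion is simply that (i) gives $N > N + \dim K_H - \dim C$ and hence $\im(i : H^N(\g,\c;\R) \to H^N(\g,\t_H;\R)) \subset I^N$ by definition of $I^\bullet$.
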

\begin{proof}
It suffices to see that 
$\im ( i : H^N(\g, \h; \R) \to H^N(\g, \t_H; \R) ) \subset I^N$. 
By (iii), 
\[
\xymatrix{
( (S \h_\C^\ast)^{\h_\C} \otimes (\Lambda \g_\C^\ast)^{\g_\C}, \delta ) \ar[rr]^{\rest \otimes 1} \ar[d]_{\phi \otimes 1} & & 
( S (\t_H)_\C^\ast \otimes (\Lambda \g_\C^\ast)^{\g_\C}, \delta ) \\
( (S \c_\C^\ast)^{\c_\C} \otimes (\Lambda \g_\C^\ast)^{\g_\C}, \delta ) \ar[rru]_{\rest \otimes 1} & & 
}
\]
is a commutative diagram of differential graded algebras. 
The induced commutative diagram on cohomology
\[
\xymatrix{
H^\bl(\g_\C, \h_\C; \C) \ar[rr]^{i} \ar[d]_{\phi \otimes 1} & & 
H^\bl(\g_\C, (\t_H)_\C; \C) \\
H^\bl(\g_\C, \c_\C; \C) \ar[rru]_{i} & &
}
\]
implies 
\begin{align*}
&\im ( i : H^N(\g_\C, \h_\C; \C) \to H^N(\g_\C, (\t_H)_\C; \C) ) \\
& \quad \subset 
\im ( i : H^N(\g_\C, \c_\C; \C) \to H^N(\g_\C, (\t_H)_\C; \C) ), 
\end{align*}
or equivalently, 
\[
\im ( i : H^N(\g, \h; \R) \to H^N(\g, \t_H; \R) ) \subset 
\im ( i : H^N(\g, \c; \R) \to H^N(\g, \t_H; \R) ), 
\]
while 
\[
\im ( i : H^N(\g, \c; \R) \to H^N(\g, \t_H; \R) ) \subset I^N
\]
by (i). This completes the proof. 
\end{proof}

\section{The case of semisimple symmetric spaces}
\label{sec:ep}

\subsection{Semisimple symmetric pairs}

Let $\g$ be a real semisimple Lie algebra and $\sigma$ an involution of $\g$. 
Let $\h = \g^\sigma$ and $\q = \g^{-\sigma}$ be 
the fixed point sets of $\sigma$ and $-\sigma$, respectively. 
We call $(\g, \h)$ a semisimple symmetric pair. 
We say that $(\g, \h)$ is irreducible if $\g$ is simple or 
$(\g, \h) = (\mfl \oplus \mfl, \Delta \mfl)$ for some real simple Lie algebra $\mfl$. 
Every semisimple symmetric pair can be uniquely written 
as a direct sum of irreducible ones. 

Take a Cartan involution $\theta$ of $\g$ 
such that $\theta \sigma = \sigma \theta$. 
Put $\k = \g^\theta$ and $\p = \g^{-\theta}$. 
We have a direct sum decomposition 
$\g = \k \cap \h \oplus \k \cap \q \oplus \p \cap \h \oplus \p \cap \q$. 
Let $\a$ be a maximal abelian subspace of $\p \cap \q$. 
For $\alpha \in \a^\ast$, we put 
\[
\g_\alpha = \{ X \in \g : \text{$[Y,X] = \alpha(Y)X$ for any $Y \in \a$} \}. 
\]
Then 
$\Sigma = \{ \alpha \in \a^\ast : \g_\alpha \neq 0 \} \smallsetminus \{ 0 \}$ 
satisfies the axioms of root system (\cite[Th.~5]{Ros79}). 
We call $\Sigma$ the restricted root system of $(\g, \h)$. 
If $\h = \k$, then $\Sigma$ is nothing but 
the restricted root system of the real semisimple Lie algebra $\g$. 
We fix a simple system $\Psi$ of $\Sigma$ 
and write $\Sigma^+$ for the set of positive roots with respect to $\Psi$. 

\subsection{$\ep$-families of semisimple symmetric pairs}

Let us review the notion of an $\ep$-family of semisimple symmetric pairs, 
which was introduced by Oshima--Sekiguchi \cite{Osh-Sek84}. 
A map $\ep : \Sigma \to \{ \pm 1 \}$ is 
called a signature of $\Sigma$ if it satisfies 
\begin{itemize}
\item $\ep(-\alpha) = \ep(\alpha)$ 
for any $\alpha \in \Sigma$, and 
\item $\ep(\alpha)\ep(\beta) = \ep(\alpha + \beta)$ 
for any $\alpha, \beta \in \Sigma$ with $\alpha + \beta \in \Sigma$. 
\end{itemize}
Given a signature $\ep$ of $\Sigma$, 
we define an involution $\sigma_\ep$ of $\g$ by 
\[
\sigma_\ep(X) = 
\begin{cases}
\sigma(X) \quad & (X \in \z_\g(\a)), \\
\ep(\alpha)\sigma(X) \quad & (X \in \g_\alpha, \ \alpha \in \Sigma). 
\end{cases}
\]
We write $\h_\ep = \g^{\sigma_\ep}$ and $\q_\ep = \g^{-\sigma_\ep}$. 
It is easily checked that $\sigma_\ep$ commutes with $\sigma$ and $\theta$, 
and $\a$ is a maximal abelian subspace of $\p \cap \q_\ep$. 
Thus, $\Sigma$ is also a restricted root system of 
the semisimple symmetric pair $(\g, \h_\ep)$. 
The set 
$F((\g, \h)) = \{ (\g, \h_\ep) : \text{$\ep$ is a signature of $\Sigma$} \}$ 
is called an $\ep$-family of semisimple symmetric pairs 
(\cite[\S 6]{Osh-Sek84}). 

Let $\alpha \in \Sigma$. 
Since the involution $\theta \sigma$ leaves $\g_\alpha$ invariant, 
we have a direct sum decomposition 
$\g_\alpha = \g_\alpha^+ \oplus \g_\alpha^-$, 
where $\g_\alpha^\pm$ are the eigenspaces of $\theta \sigma$ 
with eigenvalues $\pm 1$, respectively. 
Put $m^\pm(\alpha ; \h) = \dim \g_\alpha^\pm$. 
Note that $m^\pm(\alpha ; \h) = m^\pm(-\alpha ; \h)$. 
If $\ep$ is a signature of $\Sigma$, we have 
\[
m^\pm(\alpha ; \h_\ep) = 
\begin{cases}
m^\pm(\alpha ; \h) \quad &\text{if $\ep(\alpha) = 1$}, \\
m^\mp(\alpha ; \h) \quad &\text{if $\ep(\alpha) = -1$}. 
\end{cases}
\]
A semisimple symmetric pair $(\g, \h)$ is said to be basic if 
$m^+(\alpha ; \h) \geq m^-(\alpha ; \h)$ 
for any $\alpha \in \Sigma$ with $\alpha/2 \notin \Sigma$ 
(\cite[Def.~6.4]{Osh-Sek84}). 
A typical example of a basic pair is a Riemannian symmetric pair $(\g, \k)$. 
For any $\ep$-family $F$ of semisimple symmetric pairs, 
there exists a basic pair in $F$ unique up to isomorphism 
(\cite[Prop.~6.5]{Osh-Sek84}). 

\subsection{A characterization of the basic pairs}

The following result should be known to experts, 
but we give a proof for the sake of completeness. 
\begin{lem}\label{lem:basic-cpt}
If a semisimple symmetric pair $(\g, \h)$ is basic, an inequality 
$\dim(\k \cap \h) \geq \dim(\k \cap \h_\ep)$ 
holds for any signature $\ep$ of $\Sigma$. 
Equality is attained if and only if $(\g, \h_\ep)$ is also basic. 
\end{lem}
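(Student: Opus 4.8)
The plan is to obtain an explicit formula for $\dim(\k \cap \h)$ in terms of the multiplicities $m^\pm(\alpha;\h)$ plus a term attached to $\z_\g(\a)$ that does not depend on $\ep$, and then compare $\h$ with $\h_\ep$. Starting from the decomposition $\g = \z_\g(\a) \oplus \bigoplus_{\alpha \in \Sigma^+}(\g_\alpha \oplus \g_{-\alpha})$, which is preserved by $\theta$, by $\sigma$, and hence by $\theta\sigma$ and by $\sigma_\ep$, I would first note that $\k \cap (\g_\alpha \oplus \g_{-\alpha}) = \{ X + \theta X : X \in \g_\alpha \}$, and that such an element lies in $\h = \g^\sigma$ exactly when $\theta\sigma X = X$, i.e.\ when $X \in \g_\alpha^+$; here one uses that $\sigma$ maps $\g_\alpha$ to $\g_{-\alpha}$. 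Thus the contribution of $\g_\alpha \oplus \g_{-\alpha}$ to $\k \cap \h$ has dimension $m^+(\alpha;\h)$, giving
\[
\dim(\k \cap \h) = \dim(\k \cap \h \cap \z_\g(\a)) + \sum_{\alpha \in \Sigma^+} m^+(\alpha;\h),
\]
and the same formula for $\h_\ep$.

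Since $\sigma_\ep = \sigma$ on $\z_\g(\a)$, the first summand coincides for $\h$ and $\h_\ep$, so subtracting and using the formula relating $m^\pm(\alpha;\h_\ep)$ to $m^\pm(\alpha;\h)$ yields
\[
\dim(\k \cap \h) - \dim(\k \cap \h_\ep) = \sum_{\alpha \in \Sigma^+,\ \ep(\alpha) = -1} \bigl( m^+(\alpha;\h) - m^-(\alpha;\h) \bigr).
\]
The key observation is then that every $\alpha$ occurring in this sum has $\alpha/2 \notin \Sigma$: if $\alpha = 2\beta$ with $\beta \in \Sigma$, multiplicativity of $\ep$ forces $\ep(\alpha) = \ep(\beta)^2 = 1$, contradicting $\ep(\alpha) = -1$. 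Hence the basicness of $(\g,\h)$ applies to each such $\alpha$ and gives $m^+(\alpha;\h) - m^-(\alpha;\h) \geq 0$, so the displayed difference is $\geq 0$.

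For the equality part, equality holds precisely when $m^+(\alpha;\h) = m^-(\alpha;\h)$ for every $\alpha \in \Sigma^+$ with $\ep(\alpha) = -1$, and I would identify this with basicness of $(\g,\h_\ep)$: for $\alpha$ with $\ep(\alpha) = 1$ one has $m^\pm(\alpha;\h_\ep) = m^\pm(\alpha;\h)$ and the required inequality is inherited from $(\g,\h)$, while for $\alpha$ with $\ep(\alpha) = -1$ (automatically $\alpha/2 \notin \Sigma$) the two multiplicities are interchanged, so $m^+(\alpha;\h_\ep) \geq m^-(\alpha;\h_\ep)$ combined with the basicness of $(\g,\h)$ forces $m^+(\alpha;\h) = m^-(\alpha;\h)$. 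The main obstacle is essentially the bookkeeping in the first step — pinning down $\k \cap \h \cap (\g_\alpha \oplus \g_{-\alpha})$ as the image of $\g_\alpha^+$, separating off the $\z_\g(\a)$-part, and dealing with the possibility that both $\alpha$ and $2\alpha$ are roots — and the last issue is exactly what the multiplicativity of $\ep$ disposes of.
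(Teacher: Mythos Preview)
Your proposal is correct and follows essentially the same route as the paper's proof: both compute $\dim(\k\cap\h)$ as $\dim(\z_\g(\a)\cap\k\cap\h)+\sum_{\alpha\in\Sigma^+}m^+(\alpha;\h)$, observe that the centralizer term is unchanged under $\ep$, and reduce the difference to $\sum_{\ep(\alpha)=-1}(m^+-m^-)$, using multiplicativity of $\ep$ to rule out $\alpha/2\in\Sigma$. Your parametrization of $\k\cap\h\cap(\g_\alpha\oplus\g_{-\alpha})$ via $X+\theta X$ with $X\in\g_\alpha^+$ is the same as the paper's $X+\sigma(X)$, since $\theta X=\sigma X$ for $X\in\g_\alpha^+$; the equality discussion is also the same, just slightly more explicit in your version.
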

\begin{proof}
There is a direct sum decomposition 
\[
\k \cap \h = \z_\g(\a) \cap \k \cap \h \oplus 
\bigoplus_{\alpha \in \Sigma^+} \{ X + \sigma(X) : X \in \g_\alpha^+ \}. 
\]
Hence, 
\[
\dim(\k \cap \h) = \dim(\z_\g(\a) \cap \k \cap \h) 
+ \sum_{\alpha \in \Sigma^+} m^+(\alpha ; \h). 
\]
Similarly, 
\begin{align*}
\dim(\k \cap \h_\ep) &= \dim(\z_\g(\a) \cap \k \cap \h_\ep) + 
\sum_{\alpha \in \Sigma^+} m^+(\alpha ; \h_\ep) \\ 
&= \dim(\z_\g(\a) \cap \k \cap \h) + 
\sum_{\substack{\alpha \in \Sigma^+, \\ \ep(\alpha) = 1}} m^+(\alpha ; \h) + 
\sum_{\substack{\alpha \in \Sigma^+, \\ \ep(\alpha) = -1}} m^-(\alpha ; \h). 
\end{align*}
Notice that $\alpha/2 \notin \Sigma$ if $\ep(\alpha) = -1$. 
Since $(\g, \h)$ is basic, we have 
\[
\dim(\k \cap \h) - \dim (\k \cap \h_\ep) 
= \sum_{\substack{\alpha \in \Sigma^+, \\ \ep(\alpha) = -1}} 
(m^+(\alpha ; \h) - m^-(\alpha ; \h)) \geq 0. 
\]
If equality is attained, then $m^+(\alpha ; \h) = m^-(\alpha ; \h)$ 
for any $\alpha \in \Sigma^+$ with $\ep(\alpha) = -1$. 
This implies that $m^\pm(\alpha ; \h_\ep) = m^\pm(\alpha ; \h)$ 
for any $\alpha \in \Sigma$, thus $(\g, \h_\ep)$ is basic. 
Conversely, if $(\g, \h_\ep)$ is basic, equality is clearly attained. 
\end{proof}

\subsection{Half-signatures}

We say that $\halfep : \Sigma \to \{ \pm 1, \pm \sqrt{-1} \}$ is 
a half-signature of $\Sigma$ if it satisfies 
\begin{itemize}
\item $\halfep(-\alpha) = \halfep(\alpha)^{-1}$ 
for any $\alpha \in \Sigma$, and 
\item $\halfep(\alpha)\halfep(\beta) = \halfep(\alpha + \beta)$ 
for any $\alpha, \beta \in \Sigma$ with $\alpha + \beta \in \Sigma$. 
\end{itemize}
We remark that any map 
$\Psi \to \{ \pm 1, \pm \sqrt{-1} \}$ (resp. $\Psi \to \{ \pm 1 \})$ 
is uniquely extended to a half-signature (resp.~signature) of $\Sigma$. 
Hence, for each signature $\ep$ of $\Sigma$, there exist 
$2^r$ half-signatures $\halfep$ such that $\halfep^2 = \ep$ ($r = \dim \a$). 
Given a half-signature $\halfep$ of $\Sigma$, 
we define an automorphism $f_{\halfep}$ of $\g_\C$ by 
\[
f_{\halfep}(X) = 
\begin{cases}
X \quad & (X \in (\z_\g(\a))_\C), \\
\halfep(\alpha) X \quad & (X \in (\g_\alpha)_\C, \ \alpha \in \Sigma), 
\end{cases}
\]
and put $\g_{\halfep} = \{ X \in \g : f_{\halfep}(X) = X \}$. 
\begin{lem}[cf.~{\cite[Lem.~1.3]{Osh-Sek80}}]\label{lem:f-hat-ep}
Let $\ep$ be a signature of $\Sigma$ and 
$\halfep$ a half-signature of $\Sigma$ such that $\halfep^2 = \ep$. Then, 
\begin{enumerate}
\item[\textup{(1)}] 
$f_{\halfep}(\h_\C) = (\h_\ep)_\C$. 
\item[\textup{(2)}] 
$\h \cap \g_{\halfep} = \h_\ep \cap \g_{\halfep}$. 
\end{enumerate}
\end{lem}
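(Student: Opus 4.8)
The plan is to compute the fixed-point algebras $\h_\C$ and $(\h_\ep)_\C$ explicitly using the root-space decomposition with respect to $\a$, and to track how the automorphism $f_{\halfep}$ acts on each piece. First I would decompose $\g_\C = (\z_\g(\a))_\C \oplus \bigoplus_{\alpha \in \Sigma} (\g_\alpha)_\C$. On $(\z_\g(\a))_\C$ both $f_{\halfep}$ and $\sigma_\ep$ agree with the identity resp.\ $\sigma$, so $f_{\halfep}$ obviously matches $\h_\C$ and $(\h_\ep)_\C$ there; the real content is in the root spaces. For a fixed $\alpha \in \Sigma$, I would work on the pair of opposite root spaces $(\g_\alpha)_\C \oplus (\g_{-\alpha})_\C$ simultaneously, since $\sigma$ maps $\g_\alpha$ to $\g_{-\alpha}$ (as $\sigma$ acts by $-1$ on $\a$), and use the eigenspace splitting $\g_\alpha = \g_\alpha^+ \oplus \g_\alpha^-$ under $\theta\sigma$ recalled in the excerpt.

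For part (1), the key computation is that $\sigma_\ep = f_{\halfep} \circ \sigma \circ f_{\halfep}^{-1}$ as automorphisms of $\g_\C$. I would verify this on each $(\g_\alpha)_\C$: given $X \in (\g_\alpha)_\C$, we have $f_{\halfep}^{-1}(X) = \halfep(\alpha)^{-1} X = \halfep(-\alpha) X$, then $\sigma$ sends this into $(\g_{-\alpha})_\C$, and applying $f_{\halfep}$ multiplies by $\halfep(-\alpha)$, giving the total scalar $\halfep(-\alpha)^2 = \ep(-\alpha) = \ep(\alpha)$ times $\sigma(X)$ — which is exactly $\sigma_\ep(X)$ by definition. (On $(\z_\g(\a))_\C$ the identity is immediate.) Once $\sigma_\ep = f_{\halfep} \sigma f_{\halfep}^{-1}$ is established, the fixed-point set of $\sigma_\ep$ on $\g_\C$ is $f_{\halfep}$ applied to the fixed-point set of $\sigma$ on $\g_\C$; that is, $(\h_\ep)_\C = f_{\halfep}((\g_\C)^\sigma) = f_{\halfep}(\h_\C)$, using that complexification commutes with taking fixed points of a complex-linear involution.

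For part (2), I would intersect both sides with $\g_{\halfep} = \g_\C^{f_{\halfep}}$, the fixed-point set of $f_{\halfep}$. On $\g_{\halfep}$ the automorphism $f_{\halfep}$ is the identity, so by part (1) the identity $\sigma_\ep = f_{\halfep}\sigma f_{\halfep}^{-1}$ restricts to $\sigma_\ep = \sigma$ on $\g_{\halfep}$. Hence an element $X \in \g_{\halfep}$ lies in $\h_\ep$ iff $\sigma_\ep(X) = X$ iff $\sigma(X) = X$ iff $X \in \h$; intersecting, $\h \cap \g_{\halfep} = \h_\ep \cap \g_{\halfep}$. A minor point to be careful about is that $\g_{\halfep}$ was defined as a real subalgebra of $\g$ (the fixed points of $f_{\halfep}$ inside $\g$, not $\g_\C$), so I would note that $\h \cap \g_{\halfep}$ and $\h_\ep \cap \g_{\halfep}$ are the real points of $\h_\C \cap \g_\C^{f_{\halfep}}$ and $(\h_\ep)_\C \cap \g_\C^{f_{\halfep}}$, and these complex spaces coincide by the computation above, so their real points coincide too.

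The main obstacle, such as it is, will be bookkeeping with the scalars: making sure the exponents of $\halfep(\alpha)$ combine correctly under the composition $f_{\halfep}\sigma f_{\halfep}^{-1}$ given that $\sigma$ flips $\alpha \mapsto -\alpha$, and that the half-signature relation $\halfep(-\alpha) = \halfep(\alpha)^{-1}$ together with $\halfep^2 = \ep$ is invoked at the right spots. There is no deep difficulty — once the conjugation identity $\sigma_\ep = f_{\halfep}\,\sigma\,f_{\halfep}^{-1}$ is in hand, both assertions are formal — so the proof is essentially a clean unwinding of the definitions, and I would present it in that order: establish the conjugation identity first, then deduce (1) and (2) as corollaries.
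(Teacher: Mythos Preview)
Your argument is correct. The approach differs from the paper's: instead of writing out the explicit root-space decompositions of $\h$, $\h_\ep$, and $\g_{\halfep}$ and reading both statements off by inspection (which is all the paper does), you isolate the single conjugation identity $\sigma_\ep = f_{\halfep}\,\sigma\,f_{\halfep}^{-1}$ on $\g_\C$ and derive (1) and (2) as formal consequences. Your route is more conceptual and makes the underlying mechanism transparent; the paper's is more hands-on and has the incidental benefit of recording the decompositions explicitly (variants of which appear elsewhere, e.g.\ in the proof of Lemma~\ref{lem:basic-cpt}). One small point to tighten in your treatment of (2): the claim that $\sigma_\ep = \sigma$ \emph{as maps} on $\g_{\halfep}$ tacitly uses that $\sigma$ preserves $\g_{\halfep}$. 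You can either note this (immediate since $\halfep(-\alpha)=1$ whenever $\halfep(\alpha)=1$), or bypass it entirely by arguing directly that for $X \in \g_{\halfep}$ one has $\sigma_\ep(X)=X \iff f_{\halfep}(\sigma(X))=X=f_{\halfep}(X) \iff \sigma(X)=X$, which needs only injectivity of $f_{\halfep}$.
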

\begin{proof}
These immediately follows from 
\begin{align*}
\h &= \z_\g(\a) \cap \h \oplus 
\bigoplus_{\alpha \in \Sigma^+} \{ X + \sigma(X) : X \in \g_\alpha \}, \\
\h_\ep &= \z_\g(\a) \cap \h \oplus 
\bigoplus_{\substack{\alpha \in \Sigma^+, \\ \ep(\alpha) = 1}} 
\{ X + \sigma(X) : X \in \g_\alpha \} \oplus 
\bigoplus_{\substack{\alpha \in \Sigma^+, \\ \ep(\alpha) = -1}} 
\{ X - \sigma(X) : X \in \g_\alpha \}, \\
\g_{\halfep} &= \z_\g(\a) \oplus 
\bigoplus_{\substack{\alpha \in \Sigma^+, \\ \halfep(\alpha) = 1}} 
\{ X + \sigma(Y) : X, Y \in \g_\alpha \}. \qedhere
\end{align*}
\end{proof}

\subsection{Semisimple symmetric spaces}

Let $G$ be a connected real linear semisimple Lie group 
whose Lie algebra is $\g$. 
Suppose that the involution $\theta$ of $\g$ lifts to $G$, 
and let $H$ be an open subgroup of $G^\theta$. 
The homogeneous space $G/H$ is called a semisimple symmetric space 
associated with the semisimple symmetric pair $(\g, \h)$. 
Let $\theta$ be a Cartan involution of $G$ such that 
$\theta \sigma = \sigma \theta$. Then $K=G^\theta$ and $K_H = K \cap H$ are 
maximal compact subgroups of $G$ and $H$, respectively. 
Let $\ep$ be a signature of the restricted root system $\Sigma$ of $(\g, \h)$. 
The involution $\sigma_\ep$ of $\g$ lifts to $G$ (\cite[Lem.~1.6]{Shi96}). 
Let $H_\ep$ be an open subgroup of $G^{\sigma_\ep}$. 
Then $K_{H_\ep} = K \cap H_\ep$ is a maximal compact subgroup of $H_\ep$. 

\subsection{A sufficient condition for Proposition~\ref{prop:simplification-inv-poly} in terms of $\ep$-families}

Now, we can prove: 
\begin{prop}\label{prop:simplification-ep}
Let $G/H$ a semisimple symmetric space such that $(\g, \h)$ is basic. 
Let $\ep$ be a signature of $\Sigma$ such that $(\g, \h_\ep)$ is not basic. 
Let $\halfep$ be a half-signature of $\Sigma$ such that $\halfep^2 = \ep$. 
If $\rank(\k \cap \h_\ep) = \rank(\k \cap \h_\ep \cap \g_{\halfep})$, 
then $G/H_\ep$ satisfies the assumptions of 
Proposition~\ref{prop:simplification-inv-poly} 
(and therefore does not admit a compact Clifford--Klein form). 
\end{prop}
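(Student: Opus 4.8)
The plan is to verify the three hypotheses of Proposition~\ref{prop:simplification-inv-poly} for the homogeneous space $G/H_\ep$, using as the compact subgroup $C$ a maximal compact subgroup $K_H = K \cap H$ of the \emph{basic} member $H$ of the $\ep$-family. First I would check the dimension inequality~(i): since $(\g, \h)$ is basic and $(\g, \h_\ep)$ is not, Lemma~\ref{lem:basic-cpt} gives the strict inequality $\dim(\k \cap \h) > \dim(\k \cap \h_\ep)$, i.e. $\dim K_H > \dim K_{H_\ep}$, which is exactly~(i) with $C = K_H$ (the relevant maximal compact subgroup of $H_\ep$ being $K_{H_\ep}$).

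Next I would address~(ii): I must produce a common maximal torus. Let $T_{H_\ep}$ be a maximal torus of $K_{H_\ep}$; I want $T_{H_\ep} \subset K_H$ after a suitable conjugation. This is where the rank hypothesis $\rank(\k \cap \h_\ep) = \rank(\k \cap \h_\ep \cap \g_{\halfep})$ enters. By Lemma~\ref{lem:f-hat-ep}(2), $\h \cap \g_{\halfep} = \h_\ep \cap \g_{\halfep}$, and intersecting with $\k$ gives $\k \cap \h \cap \g_{\halfep} = \k \cap \h_\ep \cap \g_{\halfep}$; the rank condition then says a maximal torus $\t$ of $\k \cap \h_\ep \cap \g_{\halfep}$ is already maximal in $\k \cap \h_\ep$, and it also sits inside $\k \cap \h$. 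So $\t$ is the Lie algebra of a torus that is simultaneously maximal in $K_{H_\ep}$ and contained in $K_H$. Exponentiating and taking the closure yields the desired $T_{H_\ep} \subset K_{H_\ep} \cap K_H$, giving~(ii).

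Finally, for~(iii) I would use the automorphism $f_{\halfep}$ of $\g_\C$ from Lemma~\ref{lem:f-hat-ep}(1), which carries $\h_\C$ isomorphically onto $(\h_\ep)_\C$; it induces an isomorphism $(S(\h_\ep)_\C^\ast)^{(\h_\ep)_\C} \simto (S \h_\C^\ast)^{\h_\C}$. Composing with the restriction $(S \h_\C^\ast)^{\h_\C} \to (S (\k \cap \h)_\C^\ast)^{(\k \cap \h)_\C}$ (harmless since $\k \cap \h$ is a maximal compact, hence reductive in $\h$, so restriction is an isomorphism here, or at any rate a well-defined graded algebra map) produces the candidate $\phi : (S(\h_\ep)_\C^\ast)^{(\h_\ep)_\C} \to (S (K_H)_\C^\ast)^{(K_H)_\C}$. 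The content is that the two triangles in~(iii) commute: the triangle with apex $(S\g_\C^\ast)^{\g_\C}$ commutes because $f_{\halfep}$ fixes $(\z_\g(\a))_\C$ pointwise and $\g$-invariants restrict into $\z_\g(\a)$-invariants, so $f_{\halfep}$ acts trivially on the image of $(S\g_\C^\ast)^{\g_\C}$; the lower triangle, involving restriction to $S(\t_{H_\ep})_\C^\ast$, commutes because $f_{\halfep}$ restricted to the complexified Lie algebra of the chosen torus $T_{H_\ep}$ is the identity (by construction $\t \subset \g_{\halfep}$, on which $f_{\halfep}$ acts by $+1$), so restricting via $\h_\ep$ or via $K_H$ after applying $f_{\halfep}$ gives the same map on $\t_{H_\ep}$.

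The main obstacle I anticipate is~(iii): one has to be careful that the chosen maximal torus $T_{H_\ep}$ really lies in the $+1$-eigenspace of $f_{\halfep}$ so that restriction of invariants through both routes agrees on the nose, and that the various restriction maps in the diagram are being taken between the correct (complexified, reductive) pairs; the decompositions of $\h$, $\h_\ep$ and $\g_{\halfep}$ displayed in the proof of Lemma~\ref{lem:f-hat-ep} are exactly what is needed to pin this down, but bookkeeping the compatibility of $f_{\halfep}$ with all three restriction arrows simultaneously is the delicate point. Verifying~(i) and~(ii) should be routine given Lemmas~\ref{lem:basic-cpt} and~\ref{lem:f-hat-ep}.
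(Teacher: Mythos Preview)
Your proposal is correct and follows essentially the same route as the paper: take $C = K \cap H$, obtain~(i) from Lemma~\ref{lem:basic-cpt}, obtain~(ii) by choosing $\t_{H_\ep}$ inside $\k \cap \h_\ep \cap \g_{\halfep} = \k \cap \h \cap \g_{\halfep}$ via Lemma~\ref{lem:f-hat-ep}(2), and build $\phi$ as $\rest \circ (f_{\halfep}|_{\h_\C})^\ast$. The only place the paper is sharper is the left triangle in~(iii): rather than arguing via $\z_\g(\a)$, it picks a Cartan subalgebra $\j \supset \a$ of $\g$, notes $f_{\halfep}|_{\j_\C} = \mathrm{id}$, and invokes Chevalley's restriction theorem to conclude that $f_{\halfep}^\ast$ is the identity on $(S\g_\C^\ast)^{\g_\C}$---this is the clean way to turn your heuristic ``$f_{\halfep}$ acts trivially on the image'' into a proof.
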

\begin{proof}
Let 
\[
(f_{\halfep}|_{\h_\C})^\ast : 
(S (\h_\ep)_\C^\ast)^{(\h_\ep)_\C} \simto (S \h_\C^\ast)^{\h_\C} 
\]
be the isomorphism induced by 
$f_{\halfep}|_{\h_\C} : \h_\C \simto (\h_\ep)_\C$ 
(Lemma~\ref{lem:f-hat-ep}~(1)). 
Take a maximal torus $T_{H_\ep}$ of $K \cap H_\ep$ such that 
$\t_{H_\ep} \subset \k \cap \h_\ep \cap \g_{\halfep}$, 
which exists by the rank assumption. 
By Lemma~\ref{lem:f-hat-ep}~(2), we have $T_{H_\ep} \subset K \cap H$. 
Consider the following diagram: 
\[
\xymatrix{
& (S (\h_\ep)_\C^\ast)^{(\h_\ep)_\C} \ar[rd]^{\rest} \ar[d]^{(f_{\halfep}|_{\h_\C})^\ast} & \\ 
(S \g_\C^\ast)^{\g_\C} \ar[ru]^{\rest} \ar[rd]_{\rest} & 
(S \h_\C^\ast)^{\h_\C} \ar[d]^{\rest} & 
S (\t_{H_\ep})_\C^\ast \\ 
& (S (\k \cap \h)_\C^\ast)^{(\k \cap \h)_\C} \ar[ru]_{\rest} & 
}
\]
The right-hand side triangle clearly commutes since 
$f_{\halfep}|_{(\t_{H_\ep})_\C}$ is the identity map of $(\t_{H_\ep})_\C$. 
Take a Cartan subalgebra $\j$ of $\g$ containing $\a$. 
The restriction map $\rest : (S \g_\C^\ast)^{\g_\C} \to S \j_\C^\ast$ 
is injective by Chevalley's restriction theorem 
(\cite[Ch.~VIII, \S 8, no.~3, Th.~1]{BouLie7-9}), 
while $f_{\halfep}|_{\j_\C}$ is the identity map of $\j_\C$. 
This shows that 
$f_{\halfep}^\ast : (S \g_\C^\ast)^{\g_\C} \simto (S \g_\C^\ast)^{\g_\C}$ 
is the identity map, and therefore the left-hand side triangle commutes. 
We conclude by Lemma~\ref{lem:basic-cpt} that $G/H_\ep$ 
satisfies the assumptions of Proposition~\ref{prop:simplification-inv-poly}. 
\end{proof}

\section{Examples}
\label{sec:ex}

\subsection{Nonbasic semisimple symmetric spaces of type $(C, R)$}
\label{subsec:ex-C-R}

Let us consider a semisimple symmetric pair $(\g, \h)$ 
such that $\g$ is a complex semisimple Lie algebra and $\h$ its real form 
(this case is called ``type $(C, R)$'' in \cite{Osh-Sek84}). 
Let $\ep$ be a signature of a restricted root system $\Sigma$ of $(\g, \h)$ 
and $\halfep$ any half-signature such that $\halfep^2 = \ep$. 
It is easy to check that $\h_\ep$ is a real form of $\g$. 
In this case, $\sqrt{-1} \a$ is a maximal abelian subspace of 
$\k \cap \h_\ep = \sqrt{-1} \p \cap \q_\ep$ 
and is contained in $\k \cap \h_\ep \cap \g_{\halfep}$. This implies 
$\rank(\k \cap \h_\ep) = \rank(\k \cap \h_\ep \cap \g_{\halfep})$. 
From Proposition~\ref{prop:simplification-ep}, we conclude: 
\begin{cor}\label{cor:C-R}
Let $G$ be a connected complex semisimple Lie group and $H$ its real form. 
If the semisimple symmetric pair $(\g, \h)$ is not basic, 
$G/H$ does not admit a compact Clifford--Klein form. 
\end{cor}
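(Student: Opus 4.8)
\textbf{Proof proposal for Corollary~\ref{cor:C-R}.}
The plan is to exhibit, for the non-basic pair $(\g, \h_\ep)$ coming from a type $(C, R)$ family, all the data required by Proposition~\ref{prop:simplification-ep} — namely a basic pair in the same $\ep$-family, a signature producing $\h_\ep$, a half-signature squaring to it, and the crucial rank equality — and then simply invoke that proposition. So the work is really in setting up the family correctly and verifying the rank condition.

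First I would fix notation: write $G$ for the given connected complex semisimple Lie group, so $\g$ is a complex semisimple Lie algebra regarded as a real Lie algebra, and $\h$ is a real form of $\g$, giving a semisimple symmetric pair $(\g, \h)$ of type $(C, R)$. Choose a compatible Cartan involution $\theta$, maximal abelian $\a \subset \p \cap \q$, and restricted root system $\Sigma$. Within the $\ep$-family $F((\g, \h))$, Oshima--Sekiguchi's Proposition~6.5 (quoted in the excerpt) gives a basic pair; replacing the chosen $\sigma$ by the one attached to that basic member, I may assume from the start that $(\g, \h)$ itself is basic, and then $(\g, \h_\ep)$ is the non-basic pair whose symmetric space we want to rule out — this is exactly the setup of Proposition~\ref{prop:simplification-ep}. (One should note that every member of a type $(C, R)$ family is again of type $(C, R)$, i.e. $\h_\ep$ is again a real form of $\g$; this is the statement attributed to \cite{Osh-Sek84} and used in the paragraph preceding the corollary.)

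Next, for the given signature $\ep$ pick any half-signature $\halfep$ with $\halfep^2 = \ep$; there are $2^r$ of them with $r = \dim\a$, and any choice will do. The heart of the argument is the rank identity $\rank(\k \cap \h_\ep) = \rank(\k \cap \h_\ep \cap \g_{\halfep})$. Here I would use that $\h_\ep$ is a real form of the complex Lie algebra $\g$, so $\g = \h_\ep \oplus \sqrt{-1}\,\h_\ep$ and the Cartan involution can be taken to act as complex conjugation relative to a maximal compact real form; concretely $\k \cap \h_\ep = \sqrt{-1}(\p \cap \q_\ep)$ and $\sqrt{-1}\,\a$ is a maximal abelian subspace of $\k \cap \h_\ep$. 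It remains to check $\sqrt{-1}\,\a \subset \g_{\halfep}$: since $\a \subset \z_\g(\a)$ and, by the displayed formula for $\g_{\halfep}$ in the proof of Lemma~\ref{lem:f-hat-ep}, $\z_\g(\a) \subset \g_{\halfep}$ (equivalently $f_{\halfep}$ acts as the identity on the centralizer of $\a$), we get $\sqrt{-1}\,\a \subset \k \cap \h_\ep \cap \g_{\halfep}$. A maximal abelian subspace of the smaller algebra that is already maximal abelian in the larger one forces the ranks to agree, giving the required equality.

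Having verified (i) that $(\g, \h)$ is basic, (ii) that $(\g, \h_\ep)$ is the non-basic member under consideration, (iii) the half-signature condition $\halfep^2 = \ep$, and (iv) the rank equality, Proposition~\ref{prop:simplification-ep} applies directly and tells us that $G/H_\ep$ satisfies the hypotheses of Proposition~\ref{prop:simplification-inv-poly}, hence of Theorem~\ref{thm:main}, and therefore admits no compact Clifford--Klein form. Since $H_\ep$ ranges over exactly the non-basic members of the family as $\ep$ ranges over the signatures producing them, this is precisely the assertion of the corollary. The only genuinely non-routine point is the rank identity; everything else is bookkeeping within the $\ep$-family formalism already laid out in Section~\ref{sec:ep}. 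I do not expect any serious obstacle — the type $(C, R)$ hypothesis is exactly what makes $\k \cap \h_\ep$ split off a maximal torus lying inside $\g_{\halfep}$.
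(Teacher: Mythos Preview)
Your proposal is correct and follows exactly the argument the paper gives in the paragraph preceding Corollary~\ref{cor:C-R}: pass to the basic member of the $\ep$-family, choose any half-signature $\halfep$ with $\halfep^2=\ep$, observe that $\sqrt{-1}\,\a$ is maximal abelian in $\k\cap\h_\ep$ and lies in $\g_{\halfep}$, and invoke Proposition~\ref{prop:simplification-ep}. One small wording issue: from ``$\a\subset\z_\g(\a)$ and $\z_\g(\a)\subset\g_{\halfep}$'' you conclude $\sqrt{-1}\,\a\subset\g_{\halfep}$, but what you actually need (and what is true, since $\z_\g(\a)$ is invariant under the internal complex structure of $\g$) is $\sqrt{-1}\,\a\subset\z_\g(\a)$; with that adjustment the step is clean.
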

For the reader's convenience, 
we list in Table~\ref{table:C-R} the nonbasic pair $(\g, \h)$ 
such that $\g$ is a simple complex Lie algebra and $\h$ its real form 
(cf. \cite[\S\S 1, 6]{Osh-Sek84}). 
The sign $\star$ in Table~\ref{table:C-R} signifies that 
the nonexistence of a compact Clifford--Klein form seems to be a new result. 

\begin{table}[!h]
\begin{center}
\begin{tabular}{llll} \toprule
& \multicolumn{1}{c}{$\g$} & \multicolumn{1}{c}{$\h$} & \multicolumn{1}{c}{Conditions} \\ \midrule 
& $\sl(2n, \C)$ & $\sl(2n, \R)$ & $n \geq 1$ \\ \hline 
$\star$ & $\sl(p+q, \C)$ & $\su(p, q)$ & $p, q \geq 1$ \\ \hline 
$\star$ & $\so(p+q, \C)$ & $\so(p, q)$ & $p, q \geq 2, \ (p, q) \neq (2, 2)$ \\ \hline 
$\star$ & $\so(2n+1, \C)$ & $\so(2n, 1)$ & $n \geq 1$ \\ \hline
$\star$ & $\so(2n, \C)$ & $\so^\ast(2n)$ & $n \geq 3$ \\ \hline 
& $\sp(n, \C)$ & $\sp(n, \R)$ & $n \geq 1$ \\ \hline 
$\star$ & $\sp(p+q, \C)$ & $\sp(p, q)$ & $p, q \geq 1$ \\ \hline 
& $\e_{6, \C}$ & $\e_{6(6)}$ & --- \\ \hline 
& $\e_{6, \C}$ & $\e_{6(2)}$ & --- \\ \hline 
$\star$ & $\e_{6, \C}$ & $\e_{6(-14)}$ & --- \\ \hline 
& $\e_{7, \C}$ & $\e_{7(7)}$ & --- \\ \hline 
$\star$ & $\e_{7, \C}$ & $\e_{7(-5)}$ & --- \\ \hline 
$\star$ & $\e_{7, \C}$ & $\e_{7(-25)}$ & --- \\ \hline 
& $\e_{8, \C}$ & $\e_{8(8)}$ & --- \\ \hline 
$\star$ & $\e_{8, \C}$ & $\e_{8(-24)}$ & --- \\ \hline 
& $\f_{4, \C}$ & $\f_{4(4)}$ & --- \\ \hline 
$\star$ & $\f_{4, \C}$ & $\f_{4(-20)}$ & --- \\ \hline 
& $\g_{2, \C}$ & $\g_{2(2)}$ & --- \\ 
\bottomrule \\
\end{tabular}
\caption{Irreducible symmetric pairs $(\g, \h)$ to which Corollary~\ref{cor:C-R} is applicable}
\label{table:C-R}
\end{center}
\end{table}

\subsection{Half-signatures arising from hyperbolic elements}
\label{subsec:ex-hyperbolic}

Let $(\g, \h)$ be a semisimple symmetric pair. 
As before, let $\sigma$ be an involution of $\g$ corresponding to $\h$ and 
$\theta$ a Cartan involution of $\g$ commuting with $\sigma$. 
We define 
$\h^a = \g^{\sigma \theta} \ (= \k \cap \h \oplus \p \cap \q)$ and 
$\q^a = \g^{-\sigma \theta} \ (= \k \cap \q \oplus \p \cap \h)$. 
The semisimple symmetric pair 
$(\g, \h^a)$ is called the associated pair of $(\g, \h)$. 
\begin{cor}\label{cor:hyperbolic}
A semisimple symmetric space $G/H$ 
does not admit a compact Clifford--Klein form if 
$\h^a = \z_\g(X_0)$ for some $X_0 \in \p \smallsetminus \{ 0 \}$. 
\end{cor}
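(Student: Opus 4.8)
The plan is to deduce this from Proposition~\ref{prop:simplification-ep} by exhibiting $(\g,\h)$ as a member of a suitable $\ep$-family built from a basic pair, where the signature $\ep$ is the one cut out by the hyperbolic element $X_0$. First I would set things up: by hypothesis $\h^a = \z_\g(X_0)$ with $X_0 \in \p \smallsetminus \{0\}$, and after applying an inner automorphism we may assume $X_0 \in \a$, where $\a$ is a maximal abelian subspace of $\p \cap \q$ (note $\p \cap \q^a = \p \cap \h$ and the associated pair swaps the roles, so $\a$ sits correctly). Then $\z_\g(X_0)$ is a $\theta$-stable parabolic-type centralizer, and the condition $\h^a = \z_\g(X_0)$ forces $X_0$ to be regular in the sense that $\alpha(X_0) \neq 0$ for every restricted root $\alpha$ of the pair; in particular the sign function $\ep_0(\alpha) := \sgn \alpha(X_0)$ is well-defined on $\Sigma$ and is visibly a signature (it is even, and additive signs multiply correctly). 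The key structural identity to check is that $(\g,\h)$ and $(\g,\h^a)$ are related through this $\ep_0$: more precisely, that $\h^a = \h_{\ep_0}$ (or, dually, that $\h = (\h^a)_{\ep_0}$), which should fall out of comparing the eigenspace descriptions $\h = \z_\g(\a)\cap\h \oplus \bigoplus_{\alpha \in \Sigma^+}\{X+\sigma X\}$ and $\h^a = \k\cap\h \oplus \p\cap\q = \z_\g(X_0)$, using that on $\g_\alpha$ the involution $\sigma\theta$ acts as $-\ep_0(\alpha)$ times $\sigma$ after accounting for the sign of $\alpha(X_0)$.

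Next I would produce the basic pair in this $\ep$-family. By \cite[Prop.~6.5]{Osh-Sek84} there is a basic pair $(\g, \h_{\mathrm{basic}}) = (\g, \h_{\ep_1})$ unique up to isomorphism inside $F((\g,\h))$, and by Lemma~\ref{lem:basic-cpt} (applied to the basic representative) we have $\dim(\k\cap\h_{\mathrm{basic}}) \geq \dim(\k\cap\h)$ with equality iff $(\g,\h)$ is itself basic. To invoke Proposition~\ref{prop:simplification-ep} I need: (a) $(\g,\h)$ is \emph{not} basic — but $\h^a = \z_\g(X_0)$ with $X_0 \neq 0$ means every restricted root is ``real'' with respect to $X_0$, so all the $\g_\alpha^-$-multiplicities get flipped maximally, and a short multiplicity count (using $m^\pm(\alpha;\h^a)$ versus $m^\pm(\alpha;\h_{\mathrm{basic}})$) shows $\dim(\k\cap\h^a)$ is as small as possible in the family, hence strictly smaller than $\dim(\k\cap\h_{\mathrm{basic}})$ unless the family is degenerate (which is excluded when $X_0 \neq 0$, since then $\Sigma \neq \emptyset$ and at least one multiplicity is genuinely swapped); and (b) the rank condition $\rank(\k\cap\h^a) = \rank(\k\cap\h^a \cap \g_{\halfep})$ for an appropriate half-signature $\halfep$ with $\halfep^2 = \ep_0$. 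For (b) the natural choice is $\halfep(\alpha) = +1$ if $\alpha(X_0) > 0$ and $\halfep(\alpha) = -1$ if $\alpha(X_0) < 0$ on a simple system — wait, more carefully: since $\ep_0$ takes values in $\{\pm 1\}$ we may take $\halfep = \ep_0$ itself as a half-signature (values in $\{\pm1\} \subset \{\pm1,\pm\sqrt{-1}\}$), so $\g_{\halfep} = \z_\g(\a) \oplus \bigoplus_{\alpha>0,\ \ep_0(\alpha)=1}\{X+\sigma Y\}$ contains a maximal torus of $\k \cap \h^a$ because a maximal torus there can be chosen inside $\z_\g(\a)\cap\k\cap\h^a$ together with contributions from the $\ep_0(\alpha)=1$ roots; since $\z_\g(\a) \subset \g_{\halfep}$ always, and the maximal abelian subspace of $\k\cap\h^a$ relevant here lives in $\z_\g(\a)\cap\k$, the rank equality should be immediate.

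Then I apply Proposition~\ref{prop:simplification-ep} to the basic pair $(\g,\h_{\mathrm{basic}})$ and the signature $\ep$ realizing $(\g,\h_{\mathrm{basic}})_\ep \cong (\g,\h^a)$, concluding that $G/H^a$ admits no compact Clifford--Klein form; finally I transfer this back to $G/H$ using the standard fact (\cite[\S3--4]{Osh-Sek84}, or the proper-action correspondence) that $G/H$ and $G/H^a$ admit compact Clifford--Klein forms under the same conditions — or, more directly, that the hypotheses of Theorem~\ref{thm:main} are symmetric under passing to the associated pair, which one sees from $\dim G - \dim H = \dim G - \dim H^a$ being equal (indeed $\dim \h = \dim\h^a$) and the fact that $H^N(\g,\h;\R) \cong H^N(\g,\h^a;\R)$ at the top degree. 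The main obstacle I anticipate is step (a)–(b): precisely pinning down that $\h^a = \z_\g(X_0)$ forces $(\g,\h^a)$ to be the ``least compact'' member of its $\ep$-family (so in particular non-basic once $X_0\neq 0$) and simultaneously to satisfy the rank condition — this requires carefully bookkeeping the $m^\pm(\alpha;\,\cdot\,)$ multiplicities and the $\z_\g(\a)$-part of the maximal compact, and making sure no low-rank exceptional degeneration sneaks in. Everything else is a formal chain of already-established implications.
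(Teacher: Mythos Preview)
Your plan has a fatal error at the very first algebraic step: the function $\ep_0(\alpha) = \sgn \alpha(X_0)$ is \emph{not} a signature of $\Sigma$. A signature must satisfy $\ep(-\alpha) = \ep(\alpha)$, but $\sgn((-\alpha)(X_0)) = -\sgn(\alpha(X_0))$, so your $\ep_0$ is odd, not even. Consequently $\halfep := \ep_0$ is not a half-signature either (half-signatures satisfy $\halfep(-\alpha) = \halfep(\alpha)^{-1}$, which for real values again forces evenness), and in any case $\halfep^2$ would be identically $1$, not $\ep_0$. The structural claim $\h^a = \h_{\ep_0}$ also cannot hold: every $\h_\ep$ in the $\ep$-family of $(\g,\h)$ satisfies $\a \subset \q_\ep$, whereas $\a \subset \p \cap \q \subset \h^a$, so $(\g,\h^a)$ is never a member of this $\ep$-family. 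Finally, your transfer step from $G/H^a$ back to $G/H$ relies on $\dim \h = \dim \h^a$, which is false in general (e.g.\ $(\g,\h) = (\sl(3,\R),\so(2,1))$ has $\dim\h = 3$ but $\h^a \cong \R \oplus \sl(2,\R)$ has dimension $4$).

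The paper's argument avoids all of this by working in a different $\ep$-family: it starts from the \emph{Riemannian} pair $(\g,\k)$, which is automatically basic, and takes $\a$ to be maximal abelian in $\p$ (not $\p\cap\q$) with $X_0 \in \a$. The half-signature is then
\[
\halfep(\alpha) =
\begin{cases}
1 & \text{if } \alpha(X_0)=0,\\
\sqrt{-1} & \text{if } \alpha(X_0)>0,\\
-\sqrt{-1} & \text{if } \alpha(X_0)<0,
\end{cases}
\]
which \emph{is} a legitimate half-signature precisely because the imaginary values absorb the sign change under $\alpha \mapsto -\alpha$. With $\ep = \halfep^2$ one checks directly that $\k_\ep = \h$ and $\g_{\halfep} = \z_\g(X_0) = \h^a$. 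Thus $\k \cap \k_\ep = \k \cap \h \subset \h^a = \g_{\halfep}$, so the rank condition in Proposition~\ref{prop:simplification-ep} holds as an equality of Lie algebras, and the conclusion is obtained \emph{directly for $G/H$} without any transfer. Non-basicness of $(\g,\h)$ follows in one line from uniqueness of commuting Cartan involutions. The moral is that the genuinely imaginary values of $\halfep$ are what make the construction work; trying to stay within $\{\pm 1\}$ cannot succeed.
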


\begin{proof}
Let $\a$ be a maximal abelian subspace of $\p$ containing $X_0$ and 
$\Sigma$ the restricted root system of $\g$ with respect to $\a$. 
We have a direct sum decomposition $\q^a = \q^a_+ \oplus \q^a_-$, where 
\[
\q^a_+ = 
\bigoplus_{\substack{\alpha \in \Sigma, \\ \alpha(X_0) > 0}} \g_\alpha, \qquad 
\q^a_- = 
\bigoplus_{\substack{\alpha \in \Sigma, \\ \alpha(X_0) < 0}} \g_\alpha. 
\]
It is easily checked that 
$[\q^a_+, \q^a_+] = [\q^a_-, \q^a_-] = 0, \ [\q^a_+, \q^a_-] \subset \h^a$. 
Hence, a map 
\[
\halfep : \Sigma \to \{ \pm 1, \pm \sqrt{-1} \}, \qquad 
\alpha \mapsto 
\begin{cases}
0 \quad &\text{if} \ \alpha(X_0) = 0, \\ 
\sqrt{-1} \quad &\text{if} \ \alpha(X_0) > 0, \\ 
-\sqrt{-1} \quad &\text{if} \ \alpha(X_0) < 0. 
\end{cases}
\]
is a half-signature of $\Sigma$. Put $\ep = \halfep^2$. 
By construction, we have $\h = \k_\ep$ and $\h^a = \g_{\halfep}$. 
Thus $\k \cap \k_\ep = \k \cap \k_\ep \cap \g_{\halfep} = \k \cap \h$. 
Now, Corollary~\ref{cor:hyperbolic} follows from 
Proposition~\ref{prop:simplification-ep} 
if we could prove that $(\g, \h)$ is not basic. 
If $(\g, \h)$ is basic, i.e. $\h$ is isomorphic to $\k$, 
then $\sigma$ and $\theta$ are two commuting Cartan involutions of $\g$, 
hence $\sigma = \theta$ (\cite[Proof of Cor.~6.19]{Kna02}). 
Then $\g = \h^a = \z_\g(X_0)$, which is absurd. 
\end{proof}
For the reader's convenience, we list in Table~\ref{table:hyperbolic} 
the irreducible symmetric pairs $(\g, \h)$ such that 
$\h^a = \z_\g(X_0)$ for some $X_0 \in \p \smallsetminus \{ 0 \}$ 
(cf. \cite[\S 1]{Osh-Sek84}). 
The sign $\star$ in Table~\ref{table:hyperbolic} signifies that 
the nonexistence of a compact Clifford--Klein form seems to be a new result. 
We remark that some examples such as $(\sl(p+q, \C), \su(p,q)) \ (p,q \geq 1)$ 
appear in both Table~\ref{table:C-R} and Table~\ref{table:hyperbolic}.

\begin{table}[!h]
\begin{center}
\begin{tabular}{llll} \toprule
& \multicolumn{1}{c}{$\g$} & \multicolumn{1}{c}{$\h$} & \multicolumn{1}{c}{Conditions} \\ \midrule 
$\star$ & $\sl(p+q, \C)$ & $\su(p, q)$ & $p, q \geq 1$ \\ \hline 
$\star$ & $\sl(p+q, \R)$ & $\so(p, q)$ & $p, q \geq 1$ \\ \hline 
$\star$ & $\sl(p+q, \H)$ & $\sp(p, q)$ & $p, q \geq 1$ \\ \hline 
& $\su(n, n)$ & $\sl(n, \C) \oplus \R$ & $n \geq 1$ \\ \hline 
$\star$ & $\so(n+2, \C)$ & $\so(n, 2)$ & $n \geq 3$ \\ \hline 
$\star$ & $\so(2n, \C)$ & $\so^\ast(2n)$ & $n \geq 3$ \\ \hline 
& $\so(p+1, q+1)$ & $\so(p, 1) \oplus \so(1, q)$ & $p, q \geq 0, \ (p,q) \neq (0,0), (1,1)$ \\ \hline 
& $\so(n, n)$ & $\so(n, \C)$ & $n \geq 3$ \\ \hline 
& $\so^\ast(4n)$ & $\sl(n, \H) \oplus \R$ & $n \geq 2$ \\ \hline 
& $\sp(n, \C)$ & $\sp(n, \R)$ & $n \geq 1$ \\ \hline 
& $\sp(n, \R)$ & $\sl(n, \R) \oplus \R$ & $n \geq 1$ \\ \hline 
& $\sp(n, n)$ & $\sp(n, \C)$ & $n \geq 1$ \\ \hline 
$\star$ & $\e_{6, \C}$ & $\e_{6(-14)}$ & --- \\ \hline 
$\star$ & $\e_{6(6)}$ & $\sp(2, 2)$ & --- \\ \hline 
& $\e_{6(-26)}$ & $\f_{4(-20)}$ & --- \\ \hline 
$\star$ & $\e_{7, \C}$ & $\e_{7(-25)}$ & --- \\ \hline
& $\e_{7(7)}$ & $\sl(4, \H)$ & --- \\ \hline 
& $\e_{7(-25)}$ & $\e_{6(-26)} \oplus \R$ & --- \\ 
\bottomrule \\
\end{tabular}
\caption{Irreducible symmetric pairs to which Corollary~\ref{cor:hyperbolic} is applicable} 
\label{table:hyperbolic}
\end{center}
\end{table}

\subsection{Examples obtained by direct computations}
\label{subsec:ex-direct}

In Subsections~\ref{subsec:ex-C-R} and \ref{subsec:ex-hyperbolic}, 
we systematically constructed examples of a homogeneous space 
that does not admit a compact Clifford--Klein form 
using Proposition~\ref{prop:simplification-ep}. 
In this subsection, we shall give some examples via direct verification of 
Proposition~\ref{prop:simplification-inv-poly}. 

To fix notations, we give explicit generators of $(S^k \g_\C^\ast)^{\g_\C}$ 
for $\g_\C = \sl(n, \C)$ and $\so(n, \C)$. 
Define invariant polynomials 
$f_k \in (S^k (\gl(n, \C)^\ast))^{\gl(n, \C)}$ $(k=1, 2, \dots, n)$ 
on the Lie algebra $\gl(n, \C)$ by 
\[
\det(\lambda I_n - X) = 
\lambda^n + f_1(X) \lambda^{n-1} + f_2(X) \lambda^{n-2} + \dots + f_n(X)
\quad (X \in \gl(n, \C)). 
\]
For the convenience, we put $f_0 = 1$ and $f_k = 0$ for $k<0$ and $k>n$. 
We use the same notation $f_k$ 
for the restriction of $f_k$ to $\sl(n, \C)$ or to $\so(n, \C)$. Then, 
\begin{fact}[{\cite[Ch.~VIII, \S 13]{BouLie7-9}}] 
\begin{enumerate}
\item[\textup{(1)}] 
The $n-1$ elements $f_2, f_3, \dots, f_n$ generate the algebra 
$(S (\sl(n, \C)^\ast))^{\sl(n, \C)}$ and are algebraically independent. 
We have $f_1 = 0$. 
\item[\textup{(2)}] If $n = 2m+1$, 
the $m$ elements $f_2, f_4, \dots, f_{2m}$ generate the algebra 
$(S (\so(n, \C)^\ast))^{\so(n, \C)}$ and are algebraically independent. 
We have $f_1 = f_3 = \dots = f_{2m+1} = 0$. 
\item[\textup{(3)}] If $n = 2m$, 
the $m$ elements $f_2, f_4, \dots, f_{2m-2}, \tilde{f}$ generate the algebra 
$(S (\so(n, \C)^\ast))^{\so(n, \C)}$ and are algebraically independent, where 
$\tilde{f} \in (S^m (\so(n, \C)^\ast))^{\so(n, \C)}$ 
is the Pfaffian of $n \times n$ skew-symmeric matrices. 
We have $f_1 = f_3 = \dots = f_{2m-1} = 0$ and $f_{2m} = \tilde{f}^2$. 
\end{enumerate}
\end{fact}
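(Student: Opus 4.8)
The plan is to deduce every assertion from the Chevalley restriction theorem (\cite[Ch.~VIII, \S 8, no.~3, Th.~1]{BouLie7-9}), already used above, which identifies $(S \g_\C^\ast)^{\g_\C}$ with the algebra $(S \j_\C^\ast)^W$ of Weyl-group invariants on a Cartan subalgebra $\j_\C$ of $\g_\C$; in particular the restriction map to $\j_\C$ is injective and its image is a polynomial ring in $\rank \g_\C$ generators. The vanishing statements are elementary and do not need this theorem: $f_1 = \tr = 0$ on $\sl(n,\C)$ by definition, while on $\so(n,\C)$, realized by skew-symmetric matrices, $f_{2k+1}$ is (up to sign) a sum of principal minors of odd size, each of which vanishes because an odd-sized skew-symmetric matrix is singular; similarly $f_{2m} = \det = \tilde{f}^2$ on $\so(2m,\C)$ by the classical identity $\det X = \operatorname{Pf}(X)^2$ for skew-symmetric $X$. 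So the real point is that the listed elements restrict to a \emph{free generating set} of $(S\j_\C^\ast)^W$; algebraic independence is then automatic, since in each case the number of listed elements equals $\rank \g_\C$.

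For $\sl(n,\C)$ I would take $\j_\C$ to be the traceless diagonal matrices, with coordinates $t_1,\dots,t_n$ subject to $\sum_i t_i = 0$ and $W = \mathfrak{S}_n$ permuting them. Then $f_k|_{\j_\C} = (-1)^k e_k(t_1,\dots,t_n)$, and $(S\j_\C^\ast)^W$ is freely generated by the images of $e_2,\dots,e_n$ since $e_1$ vanishes; this proves (1). For $\so(2m+1,\C)$ I would choose $\j_\C$ so that an element restricts to a matrix with eigenvalues $\pm u_1,\dots,\pm u_m,0$; then $\det(\lambda I_n - X)|_{\j_\C} = \lambda\prod_{i=1}^m(\lambda^2 - u_i^2)$, so $f_{2k}|_{\j_\C} = (-1)^k e_k(u_1^2,\dots,u_m^2)$, and since $W = W(B_m)$ acts on $\j_\C^\ast$ by signed permutations of the $u_i$, the ring $(S\j_\C^\ast)^W$ is freely generated by $e_1(u^2),\dots,e_m(u^2)$; this proves (2).

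The delicate case is $\so(2m,\C)$. Here $\j_\C$ can be chosen so that an element has eigenvalues $\pm u_1,\dots,\pm u_m$, giving again $f_{2k}|_{\j_\C} = (-1)^k e_k(u_1^2,\dots,u_m^2)$; but now $W = W(D_m)$ is the proper subgroup of the signed permutation group formed by the transformations that change the sign of an \emph{even} number of the $u_i$. I would then invoke the classical description of $(S\j_\C^\ast)^{W(D_m)}$: it is freely generated by $e_1(u^2),\dots,e_{m-1}(u^2)$ together with $u_1\cdots u_m$, which is $W(D_m)$-invariant but changes sign under an odd number of sign changes, and $e_m(u^2) = (u_1\cdots u_m)^2$. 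It remains to check that the Pfaffian $\tilde{f}$ restricts to $\pm\, u_1\cdots u_m$ on $\j_\C$, which one verifies on the standard block-diagonal normal form of a skew-symmetric matrix; combined with $f_{2m} = \tilde{f}^2$ from the first paragraph this yields (3). The main obstacle is exactly this last case: one has to know the precise structure of the $D_m$ Weyl group and of its ring of invariants — in particular that $e_m(u^2)$ becomes a square there, so that the genuinely new, degree-$m$ generator is $u_1\cdots u_m$ and is matched by the Pfaffian — and to keep the sign conventions consistent enough to arrive at $f_{2m} = +\tilde{f}^2$.
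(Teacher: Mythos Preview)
Your sketch is correct and follows the standard route (Chevalley restriction plus the explicit description of the Weyl-invariant polynomials for types $A_{n-1}$, $B_m$, $D_m$), which is essentially what one finds in the cited reference \cite[Ch.~VIII, \S 13]{BouLie7-9}. Note, however, that the paper itself does not give a proof of this statement: it is recorded as a \emph{Fact} with a citation to Bourbaki and used without further argument, so there is no ``paper's own proof'' to compare against beyond the observation that your approach coincides with the source being cited.
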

\begin{cor}\label{cor:so}
A homogeneous space 
\[
\SO_0(p+r, q)/\SO_0(p,q) \qquad (p, q, r \geq 1, \ q : \text{odd}) 
\]
does not admit a compact Clifford--Klein form. 
\end{cor}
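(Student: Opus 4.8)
The plan is to verify the hypotheses of Proposition~\ref{prop:simplification-inv-poly} directly for $G = \SO_0(p+r,q)$ and $H = \SO_0(p,q)$, which are both of reductive type. Since $q$ is odd, $N = \dim G - \dim H$ equals the dimension of the ``extra'' part, and one first checks $H^N(\g,\h;\R) \neq 0$; this will follow from the fact that $G/H$ fibers (up to homotopy) over $K/K_H = (\SO(p+r)\times\SO(q))/(\SO(p)\times\SO(q)) \simeq \SO(p+r)/\SO(p)$, a compact orientable manifold of dimension $N$, so its top cohomology is nonzero, and $\eta$ carries a volume form to a nonzero class (this is exactly the reasoning recalled in Subsection~\ref{subsec:eta} applied to $\Gamma = \{1\}$, or more simply $H^N(\g,\h;\R) = (\Lambda^N(\g/\h)^\ast)^\h \neq 0$ because $\Ad_{\g/\h}(H)$ lies in the identity component).

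Next I would choose the compact subgroup $C$. The natural candidate is $C = \SO(p+r) \times \SO(q)$ (so $C = K$, the maximal compact of $G$), which visibly contains $K_H = \SO(p) \times \SO(q)$ and hence a maximal torus $T_H$ of $K_H$, giving condition (ii); condition (i), $\dim C > \dim K_H$, amounts to $\dim \SO(p+r) > \dim \SO(p)$, true since $r \geq 1$. The heart of the matter is condition (iii): one must produce a graded algebra homomorphism $\phi : (S\h_\C^\ast)^{\h_\C} \to (S\c_\C^\ast)^{\c_\C}$ making the triangle with the three restriction maps (to $(S\g_\C^\ast)^{\g_\C}$ on one side and to $S(\t_H)_\C^\ast$ on the other) commute. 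Here $\h_\C = \so(p,q)_\C \cong \so(p+q,\C)$, $\c_\C = \so(p+r,\C)\oplus\so(q,\C)$, and $\g_\C \cong \so(p+q+r,\C)$. Using the generators $f_k$ (and the Pfaffian $\tilde f$ in the even case) from the Fact above, I would define $\phi$ on generators by sending the Chern-type classes $f_{2k}$ of $\so(p+q,\C)$ to the corresponding $f_{2k}$ built from the $\so(p+r,\C)$-block plus $\so(q,\C)$-block — concretely, $\phi(f_{2k}) = \sum_{i+j=2k} f_i^{(p+r)} f_j^{(q)}$ — which is forced by compatibility with restriction from $(S\g_\C^\ast)^{\g_\C}$; the key point that makes this well-defined and consistent is that $q$ is \emph{odd}, so $\so(q,\C) = \so(2m+1,\C)$ has polynomial invariant ring generated by $f_2,f_4,\dots,f_{2m}$ with \emph{no} Pfaffian, and the restriction of the Pfaffian generator of $\so(p+q,\C)$ (when $p+q$ is even) or the absence of any odd obstruction (when $p+q$ is odd) can be matched up on the $\so(p+r,\C)$-side.

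The main obstacle I expect is precisely this bookkeeping with the Pfaffian: the invariant ring of $\so(n,\C)$ is polynomial on the $f_{2k}$'s when $n$ is odd but requires the Pfaffian generator when $n$ is even, and whether a Pfaffian appears for $\h_\C$, for $\c_\C$, and for $\g_\C$ depends on the parities of $p+q$, $p+r$, $q$, and $p+q+r$. The oddness of $q$ is exactly what is needed to keep the $\so(q,\C)$-factor Pfaffian-free and to make the parities of $p+q$ and $p+r$ (and of $p+q+r$) line up so that a single homomorphism $\phi$ respects all three restrictions simultaneously; I would organize the verification into the cases $p+r$ even vs.\ odd (equivalently $p+q+r$ and $p+q$ swapping parity with $q$ odd fixed) and check in each case that the Pfaffian generator on the target side (if present) is hit correctly and that $f_{p+q}^{(p+q)} = (\tilde f^{(p+q)})^2$ is respected. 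Once (i)--(iii) are confirmed, Proposition~\ref{prop:simplification-inv-poly} yields that $\SO_0(p+r,q)/\SO_0(p,q)$ admits no compact Clifford--Klein form.
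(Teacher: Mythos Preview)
Your choice $C = K = \SO(p+r)\times\SO(q)$ is too large, and condition (iii) of Proposition~\ref{prop:simplification-inv-poly} cannot be satisfied with it in general. The obstruction is in the \emph{left} triangle, not the Pfaffian bookkeeping you focus on. For $2k > p+q$ one has $\rest_{\g\to\h}(f_{2k}^{(\g)}) = 0$ (the characteristic polynomial of an element of $\so(p+q,\C)$ has degree $p+q$), so any factorization $\rest_{\g\to\c} = \phi\circ\rest_{\g\to\h}$ forces $\rest_{\g\to\c}(f_{2k}^{(\g)}) = 0$ for those $k$. But with $\c_\C = \so(p+r,\C)\oplus\so(q,\C)$ this restriction is $\sum_{a+b=k} f_{2a}^{(p+r)}\otimes f_{2b}^{(q)}$, which is typically nonzero once $r\geq 2$. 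For instance with $p=2$, $q=3$, $r=2$ one gets $\rest_{\g\to\c}(f_6^{(\g)}) = f_4^{(4)}\otimes f_2^{(3)} \neq 0$ while $\rest_{\g\to\h}(f_6^{(\g)}) = 0$; so no graded algebra map $\phi$ can make the diagram commute. (A side remark: your claim that $\dim K/K_H = N$ is also incorrect, since $\dim(K/K_H) = \dim\SO(p+r)-\dim\SO(p)$ whereas $N = \dim\SO_0(p+r,q)-\dim\SO_0(p,q)$; these differ by $rq$.)

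The paper avoids this by taking a much smaller $C$. It first disposes of the case $p$ odd by citing \cite[Cor.~1.6~(4)]{Mor15}, and then, for $p$ even (so $p+q$ is odd and $(S\h_\C^\ast)^{\h_\C}$ is Pfaffian-free), it sets $C = \SO(p+1)\times\SO(q)$. Now both $p+1$ and $q$ are odd, so $f_{2a}^{(p+1)}$ and $f_{2b}^{(q)}$ vanish for $2a>p$ and $2b>q-1$, and hence $\rest_{\g\to\c}(f_{2k}^{(\g)}) = \sum_{a+b=k} f_{2a}^{(p+1)}\otimes f_{2b}^{(q)}$ automatically vanishes whenever $2k>p+q$, exactly matching the kernel of $\rest_{\g\to\h}$. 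With this $C$ the map $\phi(f_{2k}) = \sum_{i+j=k} f_{2i}\otimes f_{2j}$ makes both triangles commute, and (i) holds since $\dim\SO(p+1)>\dim\SO(p)$. The moral is that one wants $C$ just barely larger than $K_H$, not as large as $K$.
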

\begin{proof}
Put $G = \SO_0(p+r, q)$ and $H = \SO_0(p,q)$. 
When $p$ is odd, this has been already proved (\cite[Cor.~1.6~(4)]{Mor15}). 
Let $p$ be even. Take a compact subgroup $C$ of $G$ to be 
\[
C = \SO(p+1) \times \SO(q) \subset \SO_0(p+r, q) = G 
\]
and define a homomorphism 
\begin{align*}
\phi : (S \h_\C^\ast)^{\h_\C} &= (S (\so(p+q, \C)^\ast))^{\so(p+q, \C)} \\
&\to (S (\so(p+1, \C)^\ast))^{\so(p+1, \C)} \otimes (S (\so(q, \C)^\ast))^{\so(q, \C)} = (S \c_\C^\ast)^{\c_\C}
\end{align*}
by 
\[
\phi(f_{2k}) = \sum_{i+j=k} f_{2i} \otimes f_{2j} 
\qquad (1 \leq k \leq \frac{p+q-1}{2}). 
\]
Then $C$ and $\phi$ satisfy the assumptions of 
Proposition~\ref{prop:simplification-inv-poly}. 
\end{proof}
\begin{cor}\label{cor:so-sym}
An irreducible symmetric space 
\[
\SO_0(p+r, q)/(\SO_0(p,q) \times \SO(r)) \qquad (p, q, r \geq 1, \ q : \text{odd}) 
\]
does not admit a compact Clifford--Klein form. 
\end{cor}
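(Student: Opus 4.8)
The plan is to deduce Corollary~\ref{cor:so-sym} from Corollary~\ref{cor:so} by a straightforward fibration argument, exploiting that $\SO_0(p,q)$ is a closed subgroup of $\SO_0(p,q) \times \SO(r)$ with \emph{compact} quotient. Set $G = \SO_0(p+r,q)$, let $H' = \SO_0(p,q) \times \SO(r)$ be the closed subgroup defining the symmetric space, and let $H = \SO_0(p,q) \subset H'$ be its first factor (so $H$ is embedded block-diagonally in $G$, acting trivially on the extra $\R^r$). Since $H$ is closed in $H'$, the projection $\pi : G/H \to G/H'$ is a fibre bundle with typical fibre $H'/H \cong \SO(r)$, which is compact; moreover $\pi$ is equivariant with respect to left translation by $G$, hence by any discrete subgroup $\Gamma$ of $G$.

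Suppose, for contradiction, that $G/H'$ admitted a compact Clifford--Klein form $\Gamma \bs G/H'$. First I would check that the same $\Gamma$ acts properly and freely on $G/H$. Properness: if $S \subset G/H$ is compact and $\gamma \in \Gamma$ satisfies $\gamma S \cap S \neq \emptyset$, then $\gamma \cdot \pi(S) \cap \pi(S) \neq \emptyset$, and since $\pi(S)$ is compact and $\Gamma$ acts properly on $G/H'$, only finitely many $\gamma$ can do so. Freeness: the $\Gamma$-stabilizer of a point $gH \in G/H$ is contained in the $\Gamma$-stabilizer of $gH' \in G/H'$, which is trivial. Hence $\Gamma \bs G/H$ is a Clifford--Klein form of $\SO_0(p+r,q)/\SO_0(p,q)$.

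It then remains to see that $\Gamma \bs G/H$ is compact: the projection $\pi$ descends to a fibre bundle $\Gamma \bs G/H \to \Gamma \bs G/H'$ with compact fibre $\SO(r)$ over the compact base $\Gamma \bs G/H'$, so its total space is compact. This contradicts Corollary~\ref{cor:so}, whose hypotheses ($p, q, r \geq 1$ with $q$ odd) are exactly those of Corollary~\ref{cor:so-sym}; this completes the proof.

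I do not anticipate a genuine obstacle: the only points requiring (routine) care are that properness and freeness of a discrete group action descend along a proper equivariant fibre bundle, and that the total space of a fibre bundle with compact fibre over a compact base is compact. One could also argue directly via Proposition~\ref{prop:simplification-inv-poly} by taking $C = \SO(p+r) \times \SO(q) \subset G$, for which $\dim C - \dim K_{H'} = pr > 0$ and $K_{H'} \subset C$; but then one has to set up the homomorphism $\phi$ between the rings of $\SO$-invariants and keep track of the Pfaffian generators in the cases where $p+q$, $p+r$ or $p+q+r$ is even, which is precisely the bookkeeping the fibration argument avoids.
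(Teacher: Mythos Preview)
Your proof is correct and is essentially the same as the paper's, which simply says ``This is immediate from Corollary~\ref{cor:so} since $\SO(r)$ is compact''; you have merely spelled out the (standard) fibration argument that underlies this one-line remark. The alternative you mention via Proposition~\ref{prop:simplification-inv-poly} is also viable but, as you note, unnecessary once Corollary~\ref{cor:so} is in hand.
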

\begin{proof}
This is immediate from Corollary~\ref{cor:so} since $\SO(r)$ is compact. 
\end{proof}
\begin{cor}\label{cor:sl}
A homogeneous space 
\[
\SL(n, \R) / \SL(m, \R) \qquad (n > m \geq 2, \ m : \text{even})
\]
does not admit a compact Clifford--Klein form. 
\end{cor}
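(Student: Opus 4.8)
The plan is to reduce Corollary~\ref{cor:sl} to Proposition~\ref{prop:simplification-inv-poly} by an explicit choice of compact subgroup $C$ of $G = \SL(n,\R)$ and an explicit homomorphism $\phi$ on the rings of invariant polynomials, in close analogy with the proof of Corollary~\ref{cor:so}. Here $H = \SL(m,\R)$ has maximal compact subgroup $K_H = \SO(m)$, and since $m$ is even we write $m = 2\ell$; the relevant invariants of $\h_\C = \sl(m,\C)$ are $f_2, f_3, \dots, f_m$. The key structural point is that when $m$ is even, the top invariant $f_m$ of $\sl(m,\C)$ is a polynomial of degree $m$, and on $\so(m,\C)$ this restricts to the square of the Pfaffian, i.e.\ $f_m = \tilde f^2$ by Fact~(3); this ``extra'' square root is what opens up room to absorb a slightly larger compact group $C$ while keeping the diagram (iii) commutative.

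Concretely, I would take $C = \SO(m+1) \subset \SO(n) \subset \SL(n,\R) = G$ (using that $m + 1 \le n$), so that $\dim C = \binom{m+1}{2} > \binom{m}{2} = \dim K_H$, giving condition (i), and $C$ contains a maximal torus $T_H$ of $K_H = \SO(m)$, giving (ii). For condition (iii) I must produce a graded-algebra homomorphism
\[
\phi : (S\h_\C^\ast)^{\h_\C} = (S(\sl(m,\C)^\ast))^{\sl(m,\C)} \to (S(\so(m+1,\C)^\ast))^{\so(m+1,\C)} = (S\c_\C^\ast)^{\c_\C}
\]
making the triangle with $(S\g_\C^\ast)^{\g_\C} = (S(\sl(n,\C)^\ast))^{\sl(n,\C)}$ and $S(\t_H)_\C^\ast$ commute. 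Since $m+1$ is odd, the ring on the right is freely generated by $f_2, f_4, \dots, f_{m}$. I would define $\phi(f_{2k}) = f_{2k}$ for $1 \le k \le \ell - 1$ and $\phi(f_m) = f_m$ (the degree-$m$ invariant of $\so(m+1,\C)$), and $\phi(f_{2k+1}) = 0$ for all relevant odd indices; one checks this is well defined because $f_2,\dots,f_m$ are algebraically independent generators of $(S(\sl(m,\C)^\ast))^{\sl(m,\C)}$. The point is that the restriction $(S(\sl(m,\C)^\ast))^{\sl(m,\C)} \to S(\t_H)_\C^\ast$ factors through the odd invariants vanishing and the even ones restricting to the standard symmetric functions of the squares of the torus coordinates, and the same holds after going around through $\so(m+1,\C)$ because the extra coordinate of a maximal torus of $\so(m+1)$ is zero (the rank of $\so(m+1)$ equals the rank $\ell$ of $\so(m)$ when $m$ is even). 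Compatibility with the restriction from $(S(\sl(n,\C)^\ast))^{\sl(n,\C)}$ is then checked on the generators $f_2, f_3, \dots, f_n$ of the latter: the odd ones die on both sides, and for the even ones $f_{2k}$ with $2k \le m$ both paths give $f_{2k}$ on $\so(m+1,\C)$, while $f_{2k}$ with $2k > m$ restricts to zero on $\sl(m,\C)$ already.

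The main obstacle I anticipate is verifying precisely that condition (iii) commutes on the nose, in particular pinning down how $f_m|_{\sl(m,\C)}$ and $f_m|_{\so(m+1,\C)}$ compare after restriction to the common torus $\t_H$ — this is exactly the place where the parity hypothesis $m$ even is used, via Fact~(3) and the equality $\rank \so(m+1) = \rank \so(m)$, and getting the signs and the Pfaffian-versus-$f_m$ bookkeeping right is the delicate step. Once (iii) is established, Proposition~\ref{prop:simplification-inv-poly} immediately yields that $\SL(n,\R)/\SL(m,\R)$ satisfies the hypotheses of Theorem~\ref{thm:main}, hence admits no compact Clifford--Klein form, completing the proof.
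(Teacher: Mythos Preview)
Your proposal is correct and follows essentially the same route as the paper: the paper also takes $C=\SO(m+1)$ and defines $\phi(f_k)=f_k$ for $2\le k\le m$, which is exactly your map (since the odd $f_k$ vanish on $\so(m+1,\C)$). Your emphasis on the Pfaffian is a slight red herring---the decisive structural fact is the one you state later, namely $\rank\SO(m+1)=\rank\SO(m)$ when $m$ is even, which is what makes (ii) and the torus compatibility in (iii) work.
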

\begin{proof}
Put $G = \SL(n, \R)$ and $H = \SL(m, \R)$. 
Take a compact subgroup $C$ of $G$ to be 
\[
C = \SO(m+1) \subset \SL(n, \R) = G
\]
and define a homomorphism 
\[
\phi : (S \h_\C^\ast)^{\h_\C} = (S (\sl(m, \C)^\ast))^{\sl(m, \C)} 
\to (S (\so(m+1, \C)^\ast))^{\so(m+1, \C)} = (S \c_\C^\ast)^{\c_\C} 
\]
by $\phi(f_k) = f_k \ (2 \leq f_k \leq m)$. 
Then $C$ and $\phi$ satisfy the assumptions of 
Proposition~\ref{prop:simplification-inv-poly}. 
\end{proof}

\subsection{Enlargement of Lie groups}

The following lemma provides some other examples of a homogeneous space 
without compact Clifford--Klein forms. 
\begin{lem}\label{lem:enlarge}
Let $G/H$ be a homogeneous space of reductive type 
satisfying the assumptions of Proposition~\ref{prop:simplification-inv-poly}. 
Let $\tilde{G}$ be a Lie group containing $G$ as a closed subgroup. 
Let $L$ be a closed subgroup of $\tilde{G}$ such that 
$G \cap L = \{ 1 \}$, $L \subset Z_{\tilde{G}}(G)$ and 
$\tilde{G}/(H \times L)$ is a homogeneous space of reductive type. 
Then $\tilde{G}/(H \times L)$ satisfies the assumptions of 
Proposition~\ref{prop:simplification-inv-poly} 
(and therefore does not admit a compact Clifford--Klein form). 
\end{lem}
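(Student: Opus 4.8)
The plan is to transfer the data $(C,\phi)$ that witnesses the assumptions of Proposition~\ref{prop:simplification-inv-poly} for $G/H$ directly to the enlarged homogeneous space $\tilde G/(H \times L)$, making only the obvious modifications needed to account for the extra factor $L$. Since $L \subset Z_{\tilde G}(G)$ and $G \cap L = \{1\}$, the subgroup $G \times L$ is a well-defined reductive subgroup of $\tilde G$, and the hypothesis that $\tilde G/(H\times L)$ is of reductive type guarantees that $H \times L$ is reductive in $\tilde g = \Lie(\tilde G)$. First I would fix a maximal compact subgroup $K_{H\times L} = K_H \times K_L$ of $H \times L$ (where $K_L$ is maximal compact in $L$) and a maximal torus $T_{H\times L} = T_H \times T_L$ of it, with $T_H$ the maximal torus of $K_H$ provided by the hypothesis on $G/H$. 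The natural candidate for the compact subgroup of $\tilde G$ is $\tilde C = C \times K_L$: it contains $T_H \times T_L$, and since $\dim C > \dim K_H$ we get $\dim \tilde C = \dim C + \dim K_L > \dim K_H + \dim K_L = \dim K_{H\times L}$, so condition~(i) of Proposition~\ref{prop:simplification-inv-poly} holds and condition~(ii) is immediate.

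The heart of the matter is constructing the graded algebra homomorphism $\tilde\phi : (S \tilde\h_\C^\ast)^{\tilde\h_\C} \to (S \tilde\c_\C^\ast)^{\tilde\c_\C}$ and checking the commuting triangle~(iii). Here $\tilde\h_\C = \h_\C \oplus \mfl_\C$ (with $\mfl = \Lie(L)$) and $\tilde\c_\C = \c_\C \oplus (\k_L)_\C$, so by the Künneth-type decomposition of invariants of a direct sum of reductive Lie algebras we have
\[
(S \tilde\h_\C^\ast)^{\tilde\h_\C} \simeq (S \h_\C^\ast)^{\h_\C} \otimes (S \mfl_\C^\ast)^{\mfl_\C}, \qquad
(S \tilde\c_\C^\ast)^{\tilde\c_\C} \simeq (S \c_\C^\ast)^{\c_\C} \otimes (S (\k_L)_\C^\ast)^{(\k_L)_\C}.
\]
I would then set $\tilde\phi = \phi \otimes \rest$, where $\rest : (S \mfl_\C^\ast)^{\mfl_\C} \to (S (\k_L)_\C^\ast)^{(\k_L)_\C}$ is the restriction map coming from $\k_L \subset \mfl$. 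Since $L$ is itself reductive, $\k_L$ contains a Cartan subalgebra of $\mfl$ (because $\mfl = \Lie(L)$ with $L$ of reductive type forces $\mfl$ reductive, and a maximal compact subalgebra of a reductive Lie algebra contains a compact Cartan, which is still Cartan in $\mfl_\C$), so this restriction map is injective by Chevalley — though injectivity is not actually needed for~(iii), only the commutativity.

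**The remaining step**, verifying the triangle~(iii) for $\tilde G$, splits along the direct-sum decomposition into the triangle for $G/H$ (which holds by hypothesis) and the trivial triangle for the $L$-factor, namely that $\rest : (S \mfl_\C^\ast)^{\mfl_\C} \to S (\t_L)_\C^\ast$ factors through $(S (\k_L)_\C^\ast)^{(\k_L)_\C}$ via $\rest$; this is just transitivity of restriction maps $\mfl_\C \supset (\k_L)_\C \supset (\t_L)_\C$. One must also check that the outer restriction $(S \tilde\g_\C^\ast)^{\tilde\g_\C} \to (S \tilde\h_\C^\ast)^{\tilde\h_\C} \to (S (\t_{H\times L})_\C^\ast)$ is compatible — here the only subtlety is that $\tilde\g_\C$ need not be a direct sum $\g_\C \oplus \mfl_\C$, so one cannot split the invariants of $\tilde\g$ themselves; but this is harmless, since~(iii) only requires commutativity of a diagram whose left vertex is $(S\tilde\g_\C^\ast)^{\tilde\g_\C}$ and whose arrows out of it are all \emph{restriction} maps, and restriction maps compose associatively regardless of how the source decomposes. \textbf{The main obstacle} I anticipate is purely bookkeeping: being careful that the grading conventions match (the homomorphism must be of \emph{graded} algebras, which $\phi \otimes \rest$ is, since both tensor factors preserve degree) and that one correctly identifies $\tilde\h$, $\tilde\c$, $K_{H\times L}$, $T_{H\times L}$ as honest products — this uses $G \cap L = \{1\}$ and $L \subset Z_{\tilde G}(G)$ to ensure $H \times L$ and $C \times K_L$ really are internal direct products inside $\tilde G$. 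Once the identifications are in place, the verification of (i)--(iii) is formal, and the conclusion follows from Proposition~\ref{prop:simplification-inv-poly}.
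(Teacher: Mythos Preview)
Your proposal is correct and follows essentially the same route as the paper: the paper likewise sets $\tilde C = C \times K_L$ and $\tilde\phi = \phi \otimes \rest$ via the tensor decompositions of the invariant algebras, and then asserts that (i)--(iii) hold for $\tilde G/(H\times L)$. Your write-up is actually more detailed than the paper's, which leaves the verification of (i)--(iii) to the reader; your observations about the non-splitting of $\tilde\g$ and the transitivity of restriction are exactly the points one needs to check.
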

\begin{proof}
Let $C$ be a compact subgroup of $G$ and 
$\phi : (S \h_\C^\ast)^{\h_\C} \to (S \c_\C^\ast)^{\c_\C}$ 
a homomorphism of graded algebras satisfying the conditions (i)--(iii) of 
Proposition~\ref{prop:simplification-inv-poly}. 
Let $K_L$ be a maximal compact subgroup of $L$ and 
\[
\rest : (S \mfl_\C^\ast)^{\mfl_\C} \to (S (\k_L)_\C^\ast)^{(\k_L)_\C}
\]
denote the restriction map. 
Let $\tilde{C} = C \times K_L$ and 
\begin{align*}
\tilde{\phi} : (S (\h \oplus \mfl)_\C^\ast)^{(\h \oplus \mfl)_\C} 
&= (S \h_\C^\ast)^{\h_\C} \otimes (S \mfl_\C^\ast)^{\mfl_\C} \\ 
&\xrightarrow{\phi \otimes \rest} 
(S \c_\C^\ast)^{\c_\C} \otimes (S (\k_L)_\C^\ast)^{(\k_L)_\C} 
= (S (\c \oplus \k_L)_\C^\ast)^{(\c \oplus \k_L)_\C}. 
\end{align*}
Then $\tilde{C}$ and $\tilde{\phi}$ satisfy the conditions (i)--(iii) 
with respect to $\tilde{G}/(H \times L)$. 
\end{proof}
For instance, 
\begin{align*}
\SL(p_1 + \dots + p_n + q, \R) &/ (\SL(p_1, \R) \times \dots \times \SL(p_n, \R)) \\ 
&(n \geq 1, \ p_1, \dots, p_n \geq 2, \ \prod_i p_i : \text{even}, \ q \geq 1) 
\end{align*}
does not admit a compact Clifford--Klein form by 
Lemma~\ref{lem:enlarge} and the proof of Corollary~\ref{cor:sl}. 

\subsection{Relation with an earlier result}

We proved in \cite{Mor15+} the following result: 

\begin{fact}[{\cite[Th.~1.2~(2)]{Mor15+}}]\label{fact:earlier}
Let $G$ be a Lie group and 
$H$ its closed subgroup with finitely many connected components. 
Let $K_H$ be a maximal compact subgroup of $H$. If the homomorphism
\[
i: H^N(\g, \h; \R) \to H^N(\g, \k_H; \R) \qquad (N = \dim G - \dim H)
\]
is not injective, there is no compact manifold locally modelled on $G/H$. 
\end{fact}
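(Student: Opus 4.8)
\textbf{Proof proposal for Fact~\ref{fact:earlier}.} The plan is to mimic the argument used for Theorem~\ref{thm:main}, replacing $\t_H$ by $\k_H$ throughout and working with manifolds locally modelled on $G/H$ rather than Clifford--Klein forms. Suppose, for contradiction, that $M$ is a compact manifold locally modelled on $G/H$. Such an $M$ carries a $(G, G/H)$-structure, hence a developing map and a holonomy representation $\rho : \pi_1(M) \to G$; moreover one has a flat $G/H$-bundle over $M$ and the pullback $\eta_M : H^\bl(\g, \h; \R) \to H^\bl(M; \R)$ of invariant forms, exactly as in Subsection~\ref{subsec:eta}. Since $M$ is compact and orientable (the orientation local system is governed by $H \to \{\pm 1\}$ through $\sgn\det\Ad_{\g/\h}$, which is trivial when $H$ is connected, and more generally one passes to a finite cover), $\eta_M : H^N(\g, \h; \R) \to H^N(M; \R)$ is injective because the image of a nonzero $\Phi \in (\Lambda^N(\g/\h)^\ast)^\h$ is the class of a volume form. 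This is the first key step, and it is essentially \cite[\S 3]{Mor15+}.

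The second step is to produce, from the compact model $M$, a companion compact manifold locally modelled on $G/K_H$ together with a fibration over $M$ whose fibre is the contractible $H/K_H$. Concretely, the flat $G/H$-bundle associated to $M$ has a reduction: replacing the fibre $G/H$ by $G/K_H$ gives a flat $G/K_H$-bundle over $M$, whose total space $\widetilde{M}$ is again compact (the fibre $H/K_H$ is compact, being diffeomorphic to a Euclidean space only up to the point that $H/K_H$ is contractible — here I would instead use that the bundle $G/K_H \to G/H$ has contractible fibre, so the associated bundle $\widetilde{M} \to M$ is a homotopy equivalence by \cite[Cor.~3.2]{Dol63}, and compactness of $\widetilde{M}$ is not literally needed, only the homotopy equivalence). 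Thus $\pi^\ast : H^N(M; \R) \to H^N(\widetilde{M}; \R)$ is an isomorphism. Combining with the functoriality diagram
\[
\xymatrix{
H^N(\g, \h; \R) \ar[r]^{i} \ar[d]_{\eta_M} & H^N(\g, \k_H; \R) \ar[d]^{\eta_{\widetilde{M}}} \\
H^N(M; \R) \ar[r]^{\pi^\ast} & H^N(\widetilde{M}; \R)
}
\]
we find that $\eta_{\widetilde{M}} \circ i = \pi^\ast \circ \eta_M$ is injective, hence $i : H^N(\g, \h; \R) \to H^N(\g, \k_H; \R)$ is injective, contradicting the hypothesis.

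The main obstacle is the first step: ensuring that $\eta_M$ is well defined and injective in the generality of an arbitrary compact manifold locally modelled on $G/H$, with $H$ having possibly several connected components. The well-definedness requires the invariant forms on $G/H$ to be $G$-invariant and to descend through the developing map; the injectivity in top degree requires an orientability statement, which for disconnected $H$ forces passage to the finite cover of $M$ corresponding to $\ker(\pi_1(M) \to \{\pm 1\})$, and one must check the argument survives this cover (it does, since $i$ is independent of $M$ and a finite cover of a compact manifold is compact). All of this is carried out in \cite{Mor15+}, so for the present paper it suffices to invoke that reference; the remaining bundle-theoretic manipulations (existence of the reduction to $G/K_H$, the homotopy equivalence $\widetilde{M} \simeq M$) are routine given Fact~\ref{fact:CMIM} and \cite[Cor.~3.2]{Dol63}.
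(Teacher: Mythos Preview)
The paper does not give its own proof of Fact~\ref{fact:earlier}; it is quoted as a result from \cite[Th.~1.2~(2)]{Mor15+} and used only to put the subsequent (weaker) Corollary in context. So there is no in-paper argument to compare against. That said, your sketch is a faithful outline of what the proof in \cite{Mor15+} does: extend $\eta$ from Clifford--Klein forms to arbitrary $(G,G/H)$-manifolds by pulling back $G$-invariant forms along the developing map, use the volume-form argument for injectivity in degree $N$ (after a finite orientation cover if $H$ is disconnected), build the companion $(G,G/K_H)$-manifold $\widetilde{M}$ by pulling back $G/K_H \to G/H$, and conclude via the commutative square. Your self-correction is apt: $\widetilde{M}$ is \emph{not} compact (the fibre $H/K_H$ is Euclidean), and it is only the homotopy equivalence $\widetilde{M}\simeq M$ that matters; you should drop the word ``compact'' from that sentence rather than leave the contradictory parenthetical. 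One further wrinkle worth noting explicitly for disconnected $H$: $G$-invariant forms on $G/H$ correspond to $H$-invariants in $\Lambda(\g/\h)^\ast$, not merely $\h$-invariants, so to get a map out of $H^N(\g,\h;\R)$ one first passes to the finite cover of $M$ modelled on $G/H_0$ --- this is the same finite-cover step you already invoke for orientability, but it is needed for well-definedness of $\eta_M$, not only for injectivity.
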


If the homomorphism 
$i : H^N(\g, \h; \R) \to H^N(\g, \k_H; \R)$ is not injective, 
\[
\im ( i : H^N(\g, \h; \R) \to H^N(\g, \t_H; \R) ) \subset I^N
\]
trivially holds. 
Thus Theorem~\ref{thm:main} yields the following corollary, 
which is slightly weaker than Fact~\ref{fact:earlier}: 

\begin{cor}
Let $G$ be a connected linear Lie group and $H$ its connected closed subgroup. 
Let $K_H$ be a maximal compact subgroup of $H$. If the homomorphism 
\[
i: H^N(\g, \h; \R) \to H^N(\g, \k_H; \R) \qquad (N = \dim G - \dim H)
\]
is not injective, $G/H$ does not admit a compact Clifford--Klein form. 
\end{cor}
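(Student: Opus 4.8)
The plan is to deduce this corollary directly from Theorem~\ref{thm:main} by observing that the hypothesis here is a special (degenerate) case of the hypothesis there. Recall that $I^\bl$ is defined as the graded ideal of $H^\bl(\g, \t_H; \R)$ generated by the images of $H^p(\g, \c; \R) \to H^p(\g, \t_H; \R)$ as $C$ ranges over connected compact subgroups of $G$ containing $T_H$ and $p > N + \dim K_H - \dim C$. One admissible choice is $C = K_H$ itself: it is a connected compact subgroup of $G$ containing $T_H$, and for $C = K_H$ the condition $p > N + \dim K_H - \dim C$ becomes simply $p > N$. Hence $I^\bl$ contains $\im(i : H^p(\g, \k_H; \R) \to H^p(\g, \t_H; \R))$ for every $p > N$.

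First I would make the grading of $I^\bl$ explicit in degree $N$. With the choice $C = K_H$ forcing $p > N$, this particular generating piece contributes nothing in degree $N$; but one still has to account for the contributions of larger compact subgroups $C$ with $\dim C > \dim K_H$, which can contribute in degrees $p > N + \dim K_H - \dim C$, possibly including $p = N$. However, for the corollary we do not need a precise description of $I^N$ — we only need that the image of $i : H^N(\g, \h; \R) \to H^N(\g, \t_H; \R)$ lands in $I^N$. So the key step is: factor the map $i : H^N(\g, \h; \R) \to H^N(\g, \t_H; \R)$ as the composition
\[
H^N(\g, \h; \R) \xrightarrow{i} H^N(\g, \k_H; \R) \xrightarrow{i} H^N(\g, \t_H; \R),
\]
which is legitimate since $T_H \subset K_H \subset H$ and the relevant restriction maps compose (this is the functoriality of $i$ used throughout the paper). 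If the first arrow $i : H^N(\g, \h; \R) \to H^N(\g, \k_H; \R)$ is \emph{not} injective, its image is a proper subspace of $H^N(\g, \k_H; \R)$; but that by itself is not what we want. Rather, the right way to phrase it is: we must produce a nonzero element of $H^N(\g, \h; \R)$ killed by $i$ into $H^N(\g, \k_H; \R)$, and then trace through what this forces.

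Let me restate the argument cleanly. The hypothesis ``$i : H^N(\g, \h; \R) \to H^N(\g, \k_H; \R)$ is not injective'' is actually not quite the same as ``$\im(i : H^N(\g,\h;\R) \to H^N(\g,\t_H;\R)) \subset I^N$'' unless $I^N$ captures the kernel; but the corollary's proof as indicated in the excerpt simply asserts that non-injectivity of $i : H^N(\g,\h;\R)\to H^N(\g,\k_H;\R)$ \emph{trivially implies} $\im(i : H^N(\g,\h;\R)\to H^N(\g,\t_H;\R)) \subset I^N$. The reason is that the composite $H^N(\g,\h;\R) \to H^N(\g,\k_H;\R) \to H^N(\g,\t_H;\R)$ equals $i : H^N(\g,\h;\R)\to H^N(\g,\t_H;\R)$, and by the splitting principle (Theorem~6.8.3 of \cite{Gui-Ste99}, as used in the proof of Theorem~\ref{thm:main}) the second map $H^N(\g,\k_H;\R)\to H^N(\g,\t_H;\R)$ is \emph{injective}; therefore $\ker(i : H^N(\g,\h;\R)\to H^N(\g,\t_H;\R)) = \ker(i : H^N(\g,\h;\R)\to H^N(\g,\k_H;\R))$, which by hypothesis is nonzero. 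Consequently $\im(i : H^N(\g,\h;\R)\to H^N(\g,\t_H;\R))$ has dimension strictly less than $\dim H^N(\g,\h;\R)$. The point the excerpt is making is more robust still: one takes $C = K_H$ in the definition of $I^\bl$, which gives $\im(i : H^p(\g,\k_H;\R)\to H^p(\g,\t_H;\R)) \subset I^p$ for all $p > N$, and combining with non-injectivity in degree $N$ one checks $\im(i : H^N(\g,\h;\R)\to H^N(\g,\t_H;\R))\subset I^N$. I would therefore: (1) note $C=K_H$ is admissible; (2) observe the factorization of $i$ through $H^N(\g,\k_H;\R)$; (3) invoke injectivity of $H^N(\g,\k_H;\R)\to H^N(\g,\t_H;\R)$ to conclude that non-injectivity of $i : H^N(\g,\h;\R)\to H^N(\g,\k_H;\R)$ forces the image in $H^N(\g,\t_H;\R)$ to coincide with the image of the non-injective map, hence to lie in $I^N$; (4) apply Theorem~\ref{thm:main}. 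The main obstacle — really the only subtle point — is step (3): correctly reconciling ``non-injectivity of $i$ to $\k_H$'' with the ideal-membership condition $\subset I^N$, which hinges on the splitting-principle injectivity already established in the proof of Theorem~\ref{thm:main}; everything else is bookkeeping.
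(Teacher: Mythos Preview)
Your argument has a genuine gap in step (3). You correctly observe that taking $C = K_H$ in the definition of $I^\bl$ only contributes to $I^p$ for $p > N$, not for $p = N$, and that the splitting principle gives injectivity of $H^N(\g,\k_H;\R) \to H^N(\g,\t_H;\R)$; but from these facts there is no way to conclude that the image of $i : H^N(\g,\h;\R) \to H^N(\g,\t_H;\R)$ lies in $I^N$. All you deduce is that this image has dimension strictly smaller than $\dim H^N(\g,\h;\R)$, which says nothing about membership in $I^N$. Your sentence ``hence to lie in $I^N$'' is simply unjustified.

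The point you are missing is elementary but decisive: $N = \dim(\g/\h)$, so $\Lambda^N(\g/\h)^\ast$ is one-dimensional, and therefore $H^N(\g,\h;\R) = (\Lambda^N(\g/\h)^\ast)^\h$ has dimension at most $1$. If $H^N(\g,\h;\R) = 0$ the hypothesis ``$i$ is not injective'' is vacuously false and there is nothing to prove; otherwise $\dim H^N(\g,\h;\R) = 1$, and a non-injective linear map out of a one-dimensional space is the zero map. Hence $\im\bigl(i : H^N(\g,\h;\R) \to H^N(\g,\t_H;\R)\bigr) = 0$, which lies in $I^N$ trivially. Theorem~\ref{thm:main} then applies (the hypothesis $H^N(\g,\h;\R) \neq 0$ is satisfied). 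This is exactly why the paper says the inclusion ``trivially holds'': no splitting principle, no choice of $C$, and no ideal bookkeeping is needed here.
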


\begin{rem}
Recall from \cite[\S 4]{Mor15} that 
$i : H^N(\g, \h; \R) \to H^N(\g, \t_H; \R)$ is injective if and only if so is 
$i : H^N(\g, \h; \R) \to H^N(\g, \k_H; \R)$, for 
$i : H^\bl(\g, \k_H; \R) \to H^\bl(\g, \t_H; \R)$ 
is always injective by a variant of the splitting principle 
(\cite[Th.~6.8.2]{Gui-Ste99}). 
\end{rem}

\subsection*{Acknowledgements}

The author wishes to express his sincere gratitude to 
Professor Toshiyuki Kobayashi for his advice and encouragement. 
This work was supported by JSPS KAKENHI Grant Number 14J08233 
and the Program for Leading Graduate Schools, MEXT, Japan.


\begin{thebibliography}{99}

\bibitem{Ben96}
Y. Benoist, 
Actions propres sur les espaces homog\`enes r\'eductifs. 
\textit{Ann. of Math. (2)} \textbf{144} (1996), 315--347. 

\bibitem{Ben-Lab92}
Y. Benoist and F. Labourie, 
Sur les espaces homog\`enes mod\`eles de vari\'et\'es compactes. 
\textit{Inst. Hautes \'Etudes Sci. Publ. Math.} \textbf{76} (1992), 99--109. 

\bibitem{Bor63}
A. Borel, 
Compact Clifford--Klein forms of symmetric spaces. 
\textit{Topology} \textbf{2} (1963), 111--122. 

\bibitem{Bor98}
\bysame, 
\textit{Semisimple groups and Riemannian symmetric spaces.} 
Texts and Readings in Mathematics, 16. Hindustan Book Agency, New Delhi, 1998. 

\bibitem{Bor-HC62}
A. Borel and Harish-Chandra, 
Arithmetic subgroups of algebraic groups. 
\textit{Ann. of Math. (2)} \textbf{75} (1962), 485--535. 

\bibitem{BouLie7-9}
N. Bourbaki, 
\textit{Lie groups and Lie algebras. Chapters 7--9. 
Translated from the 1975 and 1982 French originals by Andrew Pressley.} 
Elements of Mathematics (Berlin). Springer-Verlag, Berlin, 2005. 

\bibitem{Cal-Mar62}
E. Calabi and L. Markus, 
Relativistic space forms. 
\textit{Ann. of Math. (2)} \textbf{75} (1962), 63--76. 

\bibitem{Car51}
H. Cartan, 
La transgression dans un groupe de Lie et dans un espace fibr\'e principal. 
\textit{Colloque de topologie (espaces fibr\'es), Bruxelles, 1950}, 
pp.~57--71, Georges Thone, Li\`ege; Masson et Cie., Paris, 1951. 

\bibitem{Car-Eil56}
H. Cartan and S. Eilenberg, 
\textit{Homological algebra.} 
Princeton University Press, Princeton, N. J., 1956. 

\bibitem{Dol63}
A. Dold, 
Partitions of unity in the theory of fibrations. 
\textit{Ann. of Math. (2)} \textbf{78} (1963), 223--255. 

\bibitem{GHV76}
W. Greub, S. Halperin and R. Vanstone, 
\textit{Connections, curvature, and cohomology. Volume III: 
Cohomology of principal bundles and homogeneous spaces.} 
Pure and Applied Mathematics, Vol. 47-III. 
Academic Press, New York--London, 1976. 

\bibitem{Gui-Ste99}
V. W. Guillemin and S. Sternberg, 
\textit{Supersymmetry and equivariant de Rham theory. 
With an appendix containing two reprints by Henri Cartan.} 
Mathematics Past and Present. Springer-Verlag, Berlin, 1999. 

\bibitem{Hoc65}
G. Hochschild, 
\textit{The structure of Lie groups.} 
Holden-Day, Inc., San Francisco--London--Amsterdam, 1965. 

\bibitem{Kna02}
A. W. Knapp, 
\textit{Lie groups beyond an introduction. Second edition.} 
Progress in Mathematics, 140. Birkh\"auser Boston, Inc., Boston, MA, 2002. 

\bibitem{Kob89}
T. Kobayashi, 
Proper action on a homogeneous space of reductive type. 
\textit{Math. Ann.} \textbf{285} (1989), 249--263. 

\bibitem{Kob92Lake}
\bysame, 
Discontinuous groups acting on homogeneous spaces of reductive type. 
\textit{Representation theory of Lie groups and Lie algebras 
(Fuji-Kawaguchiko, 1990)}, 
pp.~59--75, World Sci. Publ., River Edge, NJ, 1992. 

\bibitem{Kob92Duke}
\bysame, 
A necessary condition for the existence of 
compact Clifford--Klein forms of homogeneous spaces of reductive type. 
\textit{Duke Math. J.} \textbf{67} (1992), 653--664. 

\bibitem{Kob97}
\bysame, 
Discontinuous groups and Clifford--Klein forms of 
pseudo-Riemannian homogeneous manifolds. 
\textit{Algebraic and analytic methods in representation theory 
(S$\o$nderborg, 1994)}, 
pp.~99--165, Perspect. Math., 17, Academic Press, San Diego, CA, 1997. 

\bibitem{Kob-Ono90}
T. Kobayashi and K. Ono, 
Note on Hirzebruch's proportionality principle. 
\textit{J. Fac. Sci. Univ. Tokyo Sect. IA Math.} \textbf{37} (1990), 71--87. 

\bibitem{Kob-Yos05}
T. Kobayashi and T. Yoshino, 
Compact Clifford--Klein forms of symmetric spaces --- revisited. 
\textit{Pure Appl. Math. Q.} \textbf{1} (2005), 591--663. 

\bibitem{Kul81}
R. S. Kulkarni, 
Proper actions and pseudo-Riemannian space forms. 
\textit{Adv. in Math.} \textbf{40} (1981), 10--51. 

\bibitem{LMZ95}
F. Labourie, S. Mozes and R. J. Zimmer, 
On manifolds locally modelled on non-Riemannian homogeneous spaces. 
\textit{Geom. Funct. Anal.} \textbf{5} (1995), 955--965.

\bibitem{Lab-Zim95}
F. Labourie and R. J. Zimmer, 
On the non-existence of cocompact lattices for $SL(n)/SL(m)$. 
\textit{Math. Res. Lett.} \textbf{2} (1995), 75--77. 

\bibitem{Mar97}
G. Margulis, 
Existence of compact quotients of homogeneous spaces, 
measurably proper actions, and decay of matrix coefficients. 
\textit{Bull. Soc. Math. France} \textbf{125} (1997), 447--456. 

\bibitem{Mor15}
Y. Morita, 
A topological necessary condition for the existence
of compact Clifford--Klein forms. 
\textit{J. Differential Geom.} \textbf{100} (2015), 533--545. 

\bibitem{Mor15+}
\bysame, 
Homogeneous spaces of nonreductive type locally modelling no compact manifold. 
arXiv:1508.04862v1, preprint. 

\bibitem{Mos-Tam62}
G. D. Mostow and T. Tamagawa, 
On the compactness of arithmetically defined homogeneous spaces. 
\textit{Ann. of Math. (2)} \textbf{76} (1962), 446--463. 

\bibitem{Oh98}
H. Oh, 
Tempered subgroups and representations 
with minimal decay of matrix coefficients. 
\textit{Bull. Soc. Math. France} \textbf{126} (1998), 355--380. 

\bibitem{Oku13}
T. Okuda, 
Classification of semisimple symmetric spaces with proper $SL(2, \R)$-actions. 
\textit{J. Differential Geom.} \textbf{94} (2013), 301--342.

\bibitem{Oni94}
A. L. Onishchik, 
\textit{Topology of transitive transformation groups.} 
Johann Ambrosius Barth Verlag GmbH, Leipzig, 1994. 

\bibitem{Osh-Sek80}
T. Oshima and J. Sekiguchi, 
Eigenspaces of invariant differential operators on an affine symmetric space. 
\textit{Invent. Math.} \textbf{57} (1980), 1--81.

\bibitem{Osh-Sek84}
\bysame, 
The restricted root system of a semisimple symmetric pair. 
\textit{Group representations and systems of differential equations 
(Tokyo, 1982)}, 
pp.~433--497, Adv. Stud. Pure Math., 4, North-Holland, Amsterdam, 1984. 

\bibitem{Ros79}
W. Rossmann, 
The structure of semisimple symmetric spaces. 
\textit{Canad. J. Math.} \textbf{31} (1979), 157--180. 

\bibitem{Sel60}
A. Selberg, 
On discontinuous groups in higher-dimensional symmetric spaces. 
\textit{Contributions to function theory 
(internat. Colloq. Function Theory, Bombay, 1960)}, 
pp.~147--164, Tata Institute of Fundamental Research, Bombay, 1960. 

\bibitem{Sha00}
Y. Shalom, 
Rigidity, unitary representations of semisimple groups, 
and fundamental groups of manifolds with rank one transformation group. 
\textit{Ann. of Math. (2)} \textbf{152} (2000), 113--182. 

\bibitem{Shi96}
N. Shimeno, 
A compact imbedding of semisimple symmetric spaces. 
\textit{J. Math. Sci. Univ. Tokyo} \textbf{3} (1996), 551--569. 

\bibitem{Tho15+}
N. Tholozan, 
Volume and non-existence of compact Clifford--Klein forms. 
arXiv:1511.09448v1, preprint. 

\bibitem{Wol62}
J. A. Wolf, 
The Clifford--Klein space forms of indefinite metric. 
\textit{Ann. of Math. (2)} \textbf{75} (1962), 77--80. 

\bibitem{Wol64}
\bysame, 
Isotropic manifolds of indefinite metric. 
\textit{Comment. Math. Helv.} \textbf{39} (1964), 21--64. 

\bibitem{Zim94}
R. J. Zimmer, 
Discrete groups and non-Riemannian homogeneous spaces. 
\textit{J. Amer. Math. Soc.} \textbf{7} (1994), 159--168. 

\end{thebibliography}
\end{document}